\documentclass[a4paper,12pt,reqno]{amsart}
\pdfoutput=1

\usepackage{mathrsfs,mathdots,amssymb}
\usepackage[T1]{fontenc}
\usepackage[utf8]{inputenc}
\usepackage[centering]{geometry}

\usepackage{hyperref}

\usepackage[numbers,sort&compress]{natbib}

\theoremstyle{plain}

\newtheorem{theorem}{Theorem}[section]
\newtheorem*{theorem*}{Theorem}
\newtheorem{corollary}[theorem]{Corollary}
\newtheorem{lemma}[theorem]{Lemma}
\newtheorem{proposition}[theorem]{Proposition}
\newtheorem{example}[theorem]{Example}
\numberwithin{equation}{section}

\theoremstyle{remark}

\newtheorem{definition}[theorem]{Definition}
\newtheorem{remark}[theorem]{Remark}

 \def\C{\mathbb C}
   
\def \N{\mathbb N} 
\def\i{{\rm i}}

\def\wtilde{\widetilde}

\newcommand{\coloneqq}{\mathrel{\mathop:}=}

\newcommand{\R}{\mathbb{R}}

\newcommand{\rank}{\operatorname{rank}}

\title[Herglotz-Nevanlinna matrix functions]%
{Herglotz-Nevanlinna matrix functions and  Hurwitz stability of matrix polynomials}
\date{\today}

\subjclass[2010]{34D20, 47A56, 30C15, 93D20, 26C15, 15A24}

\author[X. Zhan]{Xuzhou Zhan}
\address[X. Zhan]{College of education for the future, Beijing Normal University,
   Zhuhai 519087, China}
\email{91122021027@bnu.edu.cn}

\begin{document}

\begin{abstract}
This paper elaborates on a relationship between  matrix-valued Herglotz-Nevanlinna functions and Hurwitz stable matrix polynomials, which generalizes the corresponding classical stability criterion.  
The main motivation comes from the author's recent stability studies linked with matricial Markov parameters.  
To  fulfill our goals,  we first give a partial-fraction decomposition of a self-adjoint rational matrix function with the Herglotz-Nevanlinna property.
The next step is to connect a matrix-valued Herglotz-Nevanlinna function with its matricial Laurent series. Certain matrix extensions to two classical theorems by Chebotarev  and Grommer, respectively, are also established.\\[.3em]

    \noindent\textbf{Keywords:}Herglotz-Nevanlinna functions, stability, matrix polynomials, rational matrix functions, Hankel matrices

\end{abstract}

\maketitle

\section{Introduction}\label{SectionIntro}
 
 Throughout this paper, we denote by~$\C$,  $\R$, $\N$ and~$\N_0$, respectively, the sets
of all complex, real, positive integer and nonnegative integer numbers.
Unless explicitly
noted, we assume in this paper that $p,q,m,n\in \N$.
Let~$\mathbb C^{p\times q}$ stand for the set
of all complex~$p\times q$ matrices. Let also~$0_p$ and~$I_p$ be, respectively, the zero and the
identity~$p\times p$ matrices. Given a matrix~$A\in \mathbb C^{p\times p}$ we denote its transpose by~$A^{\rm T}$ and
its conjugate transpose by~$A^*$. If $A$ is self-adjoint ($A=A^*$), we write
$$
A\begin{cases} \succ 0, &  \mbox{if }
A \mbox{ is positive definite};\\
\prec 0, & \mbox{if } A \mbox{ is negative definite};\\
\succeq 0, & \mbox{if } A \mbox{ is nonnegative definite};\\
\preceq 0, & \mbox{if } A \mbox{ is nonpositive definite}.
\end{cases}
$$
 
Denote  by $\mathbb C_+$ the open upper half of the complex plane. Recall that a  function $R:\mathbb C_+\rightarrow \mathbb C^{p\times p}$ is said to be a {\it matrix-valued Herglotz-Nevanlinna} \footnote{In the scalar case $p=1$, other popular titles for the same class of functions are ``Nevanlinna'',  ``Pick'', ``Nevanlinna-Pick'', ``Herglotz'' and ``R-functions''.}  {\it  function} or a matrix-valued  function with the {\it Herglotz-Nevanlinna property} if it is holomorphic on $\mathbb C_+$ and its  imaginary part satisfies that
$$
{\rm Im}R(z)=\frac{1}{2{\rm i}}\left(R(z)-R(z)^{*}\right)\succeq 0,\quad z\in \mathbb C_+.
$$
The applications of this matrix analogue of scalar Herglotz-Nevanlinna functions (analytic maps of $\mathbb C_+$ into itself)
  or their operator-valued analogue appear in  system  theory \cite{GT00,BS}, quantum walks \cite{GrVe}, interpolation and moment problems \cite{Kov}, etc.  Some other extensions  of  Herglotz-Nevanlinna functions or their variants  can also be seen in the formulations of noncommutative functions \cite{PT} and multivariate functions \cite{AACLS,ACS,LN}. 
  
Given a $p\times p$ matrix polynomial $F(z)$ of the form
\begin{equation}\label{ReprentationMatrixPolynomials}
    F(z)=\sum_{k=0}^{n} A_k z^{n-k},\quad \mbox{with}\quad A_0,\ldots, A_n\in \mathbb C^{p\times p},
    \quad A_0\ne 0_{p},
\end{equation}
$n$ is called the \emph{degree} of~$F(z)$ and denoted by $\deg F$.  $F(z)$ is said to be \emph{regular} if
$\det F(z)$ is not identically zero and it is
\emph{monic} if~$A_0=I_p$. 
Given
a regular matrix polynomial $F(z)$, we say that $\lambda\in \mathbb C$ is a \emph{zero} (also called \emph{latent root}) of
$F(z)$ if the determinant $\det F(\lambda)=0$. The \emph{spectrum}~$\sigma(F)$ of~$F(z)$ is the set of all zeros of~$F(z)$. 

 In particular, a regular $p\times p$ matrix polynomial $F(z)$ is called {\it Hurwitz stable} if $\sigma(F)$  is a subset of the open left half of the complex plane. For application,  
 the Hurwitz stability of $F(z)$ determines the asymptotic stability of the linear high-order  differential systems
\begin{equation*}
    A_{0} y^{(n)}(t)+A_{1} y^{(n-1)}(t)+\cdots+A_{n} y(t)=u(t),
\end{equation*}
where $y(t)$ and $u(t)$ are, respectively, the control output vector and the control input vector with respect to the time parameter $t$.  For testing the Hurwitz stability for regular matrix polynomials without
computing their determinants or zeros, one can use algebraic techniques as in the literature  \cite{HAP,LPJ, LT82,BA,Gal,HH,XD}.
  
For a real scalar polynomial, its Hurwitz stability -- the property that all its zeros lie in the open left half-plane -- has an intimate connection with the Herglotz-Nevanlinna property of a related real rational function:

 \begin{theorem*}[\textit{Stability criterion via Herglotz-Nevanlinna functions}] \cite[Proposition 4.4]{AGT} \cite{Gan,Bar} 
 Let $f(z)$ be a real scalar polynomial  of degree~$n$ with the even part $f_{e}(z)$ and odd part $f_{o}(z)$ which satisfies $f_e(z^2)+z f_o(z^2)=f(z)$. $f(z)$  is  Hurwitz stable if and only if the associated rational function $r(z)=-\frac{f_{o}(z)}{f_e(z)}$ is a Herglotz-Nevanlinna function with 
exactly $\lfloor \frac{n}{2}\rfloor$ poles, all of which are negative real, and the limit $
\lim\limits_{z\rightarrow \infty} r(z)$ is positive real when $n$  is additionally odd.
\end{theorem*}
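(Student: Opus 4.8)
The plan is to build everything on the elementary identity $f(-z)=f_e(z^2)-zf_o(z^2)$, which together with $f(z)=f_e(z^2)+zf_o(z^2)$ gives the factorisation
\[
f(z)=f_e(z^2)\bigl(1-z\,r(z^2)\bigr),\qquad r=-\tfrac{f_o}{f_e}.
\]
Because the poles of $r$ are exactly the zeros of $f_e$, they are annihilated by the prefactor $f_e(z^2)$, and one checks that $\lambda$ is a zero of $f$ precisely when $\lambda\,r(\lambda^2)=1$ (with $\lambda^2$ not a pole of $r$). Thus the whole problem is to locate, in the $\lambda$-plane, the solutions of $\lambda\,r(\lambda^2)=1$, and the Herglotz property of $r$ is the tool I would use to control them.

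For the implication ``$r$ Herglotz $\Rightarrow$ $f$ Hurwitz'' I would show directly that $f$ has no zero in the closed right half-plane, arguing quadrant by quadrant and using that a Herglotz function has only simple real poles and is real off them. If $\operatorname{Re}\lambda>0$ and $\operatorname{Im}\lambda>0$ then $\lambda^2\in\C_+$, so $\operatorname{Im}r(\lambda^2)\ge0$ and $\arg\bigl(\lambda\,r(\lambda^2)\bigr)\in(0,\tfrac{3\pi}{2})$, which already forbids $\lambda r(\lambda^2)=1$; the open fourth quadrant follows by conjugation since $f$ is real. For $\lambda=\i y$ one has $\lambda^2=-y^2\le0$, where either $-y^2$ is a simple pole, forcing $f(\i y)=\i y\,f_o(-y^2)\neq0$, or $r(-y^2)$ is real and $\lambda r(\lambda^2)\in\i\R$ cannot equal $1$; the origin is excluded because the negativity of the poles gives $f(0)=f_e(0)\neq0$. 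Finally, on $(0,\infty)$ the equation reads $r(w)=w^{-1/2}$ with $w=\lambda^2$; here $r$ is increasing (Herglotz) while $w^{-1/2}$ is positive and decreasing, and the prescribed value of $r$ at infinity keeps $r$ on the side of the axis disjoint from the range of $w^{-1/2}$, so there is no solution. As $\deg f=n$, all $n$ zeros then lie in the open left half-plane.

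For the converse I would invoke the principle of the argument along the imaginary axis -- the Chebotarev-type statement that $f$ is Hurwitz if and only if $\arg f(\i y)$ increases by $n\pi$ as $y$ runs from $-\infty$ to $+\infty$. Writing $\operatorname{Re}f(\i y)=f_e(-y^2)$ and $\operatorname{Im}f(\i y)=y f_o(-y^2)$, this monotone sweep of the argument is possible only if the zeros of $y\mapsto f_e(-y^2)$ and $y\mapsto y f_o(-y^2)$ are all real, simple and strictly interlacing, and only if the sweep has the right sign. Reality of these zeros says every zero of $f_e$ is real and $\le0$; coprimality of $f_e$ and $f_o$ (a shared zero $w_0=z_0^2$ would make $z_0$ and $-z_0$ two antipodal zeros of $f$, impossible in the open left half-plane) promotes this to simple negative poles of $r$, while the sign of the sweep forces $r$ to increase between its poles, i.e.\ $\operatorname{Im}r\ge0$ on $\C_+$. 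The number of poles equals $\deg f_e=\lfloor n/2\rfloor$ (stability makes all coefficients of $f$ nonzero, so the top even coefficient is present), and comparing the leading coefficients of $f_o$ and $f_e$ yields $r(\infty)=0$ for even $n$ and the stated one-sided limit for odd $n$.

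I expect the decisive obstacle to be this converse, and specifically the passage from the \emph{global} winding count $n\pi$ to the \emph{local} conclusions that each zero of $f_e$ is simple, real, negative and carries the correct residue sign; this is the analytic core of the Hermite--Biehler mechanism, and it is where the monotonicity hidden in the Herglotz property must be extracted. A secondary but unavoidable nuisance is the bookkeeping across the parity of $n$: the exact count $\lfloor n/2\rfloor$ and the behaviour of $r$ at infinity have to be tracked separately for even and odd $n$, and it is exactly the odd case that generates the extra condition on $\lim_{z\to\infty}r(z)$.
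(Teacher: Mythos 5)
First, a point of orientation: the paper never proves this statement. It is quoted in the introduction as classical background (citing \cite{AGT,Gan,Bar}), and the machinery the paper actually develops for its matrix generalization (Theorem~\ref{ThmHurwitz}) runs through matricial Markov parameters, block Hankel matrices and Anderson--Jury Bezoutians rather than through any argument in the $\lambda$-plane. So your direct complex-analytic proof is an independent route. Its forward direction is attractive: the factorization $f(z)=f_e(z^2)\bigl(1-z\,r(z^2)\bigr)$ and the quadrant-by-quadrant exclusion via $\arg\bigl(\lambda\,r(\lambda^2)\bigr)$ handle the open first and fourth quadrants and the imaginary axis cleanly (granting the coprimality of $f_e$ and $f_o$, which does follow from the pole count $\lfloor n/2\rfloor$). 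The converse, as you acknowledge, defers the real work to the Hermite--Biehler/argument-principle mechanism; that is the standard reference proof (Gantmacher, Ch.~XV), but the passage from the winding count $n\pi$ to simple, negative, interlacing zeros with the correct residue signs is precisely the content you would still have to write out.

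The genuine gap is the sign of $\lim_{z\to\infty}r(z)$ for odd $n$, and it is not cosmetic. As printed, the statement demands that this limit be \emph{positive}; but a Hurwitz polynomial with positive leading coefficient has all coefficients positive, so the leading coefficients of $f_o$ and $f_e$ are both positive and $r(\infty)=-\operatorname{lead}(f_o)/\operatorname{lead}(f_e)<0$. Concretely, $f(z)=(z+1)^3$ gives $r(w)=-(w+3)/(3w+1)$ with $r(\infty)=-\tfrac13$; the correct sign is \emph{negative}, consistent with condition (e) of the paper's own Theorem~\ref{ThmHurwitz}, which requires $\lim_{z\to\infty}R_F(z)\prec 0$. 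Your argument secretly needs the negative sign in both directions. On the positive half-axis you claim the prescribed limit ``keeps $r$ on the side of the axis disjoint from the range of $w^{-1/2}$''; that is true only if the limit is $\le 0$. If the limit were a positive number $c$, then $r$ is increasing on $(0,\infty)$ (all poles being negative) with $r<c$ there, so $r-w^{-1/2}$ is negative near $0^+$ and positive for large $w$, hence vanishes at some $w_0>0$, and $\lambda=\sqrt{w_0}$ is a positive real zero of $f$ --- i.e.\ the forward implication of the printed statement is actually false. In the converse your own leading-coefficient computation produces a negative limit, which you then call ``the stated one-sided limit'' without checking its sign. You should either correct the statement to ``negative real'' and prove that version, or explicitly record that the printed version fails; as written, the proposal proves neither.
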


To extend the stability criterion via Herglotz-Nevanlinna functions to the matrix case, we are going to clarify the relation between the Hurwitz  stability for matrix polynomials and  matrix-valued Herglotz-Nevanlinna functions. More specifically,  given  a matrix polynomial $F(z)$, one needs to find an associated {\it  rational matrix function} (that is,  a matrix-valued function whose
entries are complex-valued rational functions) and connect the Hurwitz stability of $F(z)$ with its Herglotz-Nevanlinna property.

For a rational matrix function $R(z)$,  
the author is unaware of certain suitable decompositions or forms to reflect all its zeros and poles. On the other hand, turning $R(z)$ into some scalar rational function (the determinant $\det R(z)$, the trace of $R(z)$, $X^* R(z) Y$ for some column vectors $X$ and $Y$ etc.) in general results in loss of some of its essential features. 
 That is to say, certain basic techniques in the scalar proofs may be totally unavailable to investigate Herglotz-Nevanlinna property and  Hurwitz stability in the matrix case. 

To overcome these obstacles, we pose a new line to deal with the matrix extension.
Our idea is to investigate under what conditions a rational matrix function is a matrix-valued Herglotz-Nevanlinna function.  We begin with a partial-fraction decomposition of $R(z)$ when it becomes a matrix-valued Herglotz-Nevanlinna function (see Proposition \ref{ProRRepresent}). Here we focus on the case when $R(z)$ is {\it self-adjoint}, that is, it obeys that $R(z)=R(\bar z)^*$ 
for all $z\in\mathbb C$ except the poles of  the entries of $R(z)$. This factorization with  constant coefficient matrices given in Corollary \ref{CorMassE_j} can be viewed as a matricial analogue of the classical Chebotarev theorem.

The next step is to link the Herglotz-Nevanlinna property of $R(z)$ with the matricial Laurent series 
\begin{equation}\label{RationalLaurent}
       R(z)=\sum_{j=0}^{k}z^{j} {\bf s}_{-(j+1)}+\sum_{j=0}^{\infty} \frac{{\bf s}_j}{z^{j+1}} 
     \end{equation}
    that converges for sufficiently large $|z|$.  More specifically,
denote a finite or infinite block Hankel matrix associated with~$R(z)$ by
\begin{equation*}
    {\mathscr H}_{j,k}(R)\coloneqq\begin{bmatrix}
        {\bf s}_j   & \cdots & {\bf s}_{j+k}  \\
        \vdots   & \ddots & \vdots\\
        {\bf s}_{j+k} &  \cdots& {\bf s}_{j+2k}
    \end{bmatrix},
\end{equation*}
where~$j \in\N_0\cup \{-1,-2\}$ and~$k\in \N_0\cup\{\infty\}$. For simplicity we
write~${\mathscr H}_{k}(R)$ for~${\mathscr H}_{0,k}(R)$. The above  block Hankel matrices  satisfy some properties when $R(z)$ is a matrix-valued Herglotz-Nevanlinna function (the definition of {\it right coprimeness} of matrix polynomials can be found in Definition \ref{DefCoprime} below): 

\begin{theorem}\label{ProRHankel}
Let $R(z)$ be a $p\times p$ self-adjoint rational matrix  function with the matrix fraction \eqref{RQP}. Assume that $R(z)$ is  a matrix-valued Herglotz-Nevanlinna function. Then ${\mathscr H}_{-2,0}(R)\succeq 0$, ${\mathscr H}_{{\rm deg}P-1}(R)\preceq 0$  and $|\deg Q-\deg P|\leq 1$. If, in addition, $P(z)$ and $Q(z)$  are right coprime and $Q(z)$ is regular, then ${\mathscr H}_{{\rm deg}P-1}(R)\prec 0$. In this case, $A\equiv 0_p$ or $A \succ 0$ for $R(z)$ written
in the form \eqref{RRepresent}. 
\end{theorem}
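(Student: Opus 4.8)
The plan is to extract everything from the partial-fraction form provided by Proposition~\ref{ProRRepresent}, which under the Herglotz-Nevanlinna hypothesis lets me write $R(z)=Az+B+\sum_{j=1}^{\ell}\frac{A_j}{t_j-z}$ with $A\succeq0$, $B=B^*$, distinct real nodes $t_j$, and residues $A_j\succeq0$. Expanding each summand in powers of $z^{-1}$ identifies the coefficients in \eqref{RationalLaurent} as ${\bf s}_{-2}=A$, ${\bf s}_{-1}=B$ and ${\bf s}_i=-\sum_{j}A_jt_j^{\,i}$ for $i\ge0$, so that ${\mathscr H}_{-2,0}(R)=[{\bf s}_{-2}]=A\succeq0$ is immediate. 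Writing each residue as a Gram matrix $A_j=L_jL_j^*$, a short computation gives, for any block vector $\xi=(\xi_0,\dots,\xi_k)^{\rm T}$ with $\xi_r\in\C^p$,
\[
\xi^*{\mathscr H}_k(R)\,\xi=-\sum_{j=1}^{\ell}\bigl\|L_j^*P_\xi(t_j)\bigr\|^2\le0,\qquad P_\xi(z):=\sum_{r=0}^{k}z^r\xi_r,
\]
so in particular ${\mathscr H}_{\deg P-1}(R)\preceq0$. The bound $|\deg Q-\deg P|\le1$ I would read off from the growth of $R$: a Herglotz-Nevanlinna function grows at most linearly, so its non-decaying part $Az+B$ has degree $\le1$, while the presence of any finite pole forces ${\bf s}_0=-\sum_jA_j\neq0$ and hence decay no faster than $z^{-1}$; comparing with the fraction \eqref{RQP} confines the difference of degrees to $\{-1,0,1\}$.

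For the strict inequality the displayed formula is decisive: since each term is nonnegative, $\xi^*{\mathscr H}_{\deg P-1}(R)\xi=0$ forces $A_jP_\xi(t_j)=0$ for every $j$, where now $P_\xi$ is a $\C^p$-valued polynomial of degree $\le\deg P-1$. Thus ${\mathscr H}_{\deg P-1}(R)\prec0$ is equivalent to the implication
\[
A_jP_\xi(t_j)=0\ (1\le j\le\ell)\ \Longrightarrow\ P_\xi\equiv0 .
\]
The condition $A_jP_\xi(t_j)=0$ says exactly that all finite residues of the vector function $R(z)P_\xi(z)$ vanish, so $S(z):=R(z)P_\xi(z)$ is a polynomial. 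Setting $v(z):=P(z)^{-1}P_\xi(z)$ and using $R=QP^{-1}$, this reads $Pv=P_\xi$ and $Qv=RP_\xi=S$, both polynomials.

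Here right coprimeness enters through the Bezout identity of Definition~\ref{DefCoprime}: there exist polynomial matrices $X,Y$ with $XP+YQ=I_p$, whence $v=X(Pv)+Y(Qv)=XP_\xi+YS$ is itself a polynomial. Consequently $P_\xi=Pv$ with $v$ polynomial and $\deg P_\xi\le\deg P-1$, and a degree count should force $v\equiv0$, giving $P_\xi\equiv0$ and hence ${\mathscr H}_{\deg P-1}(R)\prec0$. This last degree count is what I expect to be the main obstacle: it is immediate once the leading coefficient of $P$ is nonsingular, since then $\deg(Pv)=\deg P+\deg v\ge\deg P>\deg P_\xi$ for $v\neq0$; but right coprimeness only governs the finite zeros of the pair $(P,Q)$, so the nonsingularity of the leading coefficient -- equivalently the column-reducedness $\deg\det P=p\,\deg P$, which matches the size of ${\mathscr H}_{\deg P-1}(R)$ with the finite McMillan degree of $R$ -- must be drawn from the particular form of the fraction \eqref{RQP} together with the regularity of $Q$. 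Establishing this reducedness is the crux of the argument.

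Finally, for the dichotomy I would argue from \eqref{RRepresent} and the degree bound: when $\deg Q\le\deg P$ the linear term is absent and $A\equiv0_p$, while when $\deg Q=\deg P+1$ reading \eqref{RQP} at infinity gives $A=Q_0P_0^{-1}$ with $P_0,Q_0$ the leading coefficients. Since $A\succeq0$, the claim $A\succ0$ is equivalent to $Q_0$ being nonsingular, and proving this -- the matrix counterpart of the scalar statement that the limit at infinity is strictly positive in the odd-degree case -- is the second delicate point, to be settled again through the leading-coefficient (column-reducedness) structure carried by \eqref{RQP} and the coprimeness of $P,Q$. I expect these two leading-coefficient issues, rather than the positivity computations, to be where the hypotheses must be used with the greatest care.
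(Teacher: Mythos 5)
Your treatment of the first three conclusions follows the paper's own route: both start from the partial-fraction form of Proposition~\ref{ProRRepresent}, read off ${\mathscr H}_{-2,0}(R)=A\succeq 0$, obtain ${\mathscr H}_{\deg P-1}(R)\preceq 0$ from the nonnegativity of the residues, and get the degree bound by comparing the polynomial part with the right quotient and using the non-vanishing of $\sum_j A_j$ when a pole is present. For the strict inequality, however, you take a genuinely different route. The paper conjugates ${\rm diag}(-A,{\mathscr H}_{\deg P-1}(R))$ into the Anderson--Jury Bezoutian ${\mathbf B}_{P^{\vee},Q^{\vee}}(P,Q)$ (Lemma~\ref{LemBezoutHankel}) and quotes the Lerer--Tismenetsky nonsingularity criterion \cite{LT82}; you instead analyse the kernel of the Hankel quadratic form directly and convert a null vector $\xi$ into the polynomial identity $P_\xi=Pv$ via a Bezout identity $XP+YQ=I_p$. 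Your argument is more elementary and self-contained, and the issue you flag as ``the crux'' --- nonsingularity of the leading coefficient of $P$ --- is in fact a non-issue: the fraction \eqref{RQP} is set up with $P(z)$ \emph{monic}, so the leading coefficient is $I_p$, $\deg(Pv)=\deg P+\deg v$ whenever $v\not\equiv 0$, and your degree count closes at once. With that observation your proof of ${\mathscr H}_{\deg P-1}(R)\prec 0$ is complete (and, notably, does not even invoke the regularity of $Q(z)$).

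The genuine gap is the final dichotomy ``$A\equiv 0_p$ or $A\succ 0$''. You correctly reduce it to the nonsingularity of the leading coefficient of $Q(z)$, but you supply no argument and explicitly defer it; this part of the theorem is therefore not proved in your proposal. The paper extracts it from Lemma~\ref{LemBezoutHankel}(a), where the asserted equivalence (via \cite[Theorem~0.2]{LT82}) between invertibility of the Bezoutian and ``right coprime plus $Q$ regular'' forces $A$ to have full rank whenever $A\neq 0_p$. Your instinct that finite right coprimeness ``only governs the finite zeros'' and cannot by itself control behaviour at infinity is, however, well placed, and you should press on it before trying to close the gap: with $R(z)={\rm diag}(z-z^{-1},\,-z^{-1})$, $P(z)=zI_2$, $Q(z)={\rm diag}(z^2-1,\,-1)$, the pair $P,Q$ is right coprime ($\det P=z^2$ and $\det Q=1-z^2$ share no root), $Q$ is regular, $R$ is a self-adjoint Herglotz--Nevanlinna function with ${\mathscr H}_0(R)=-I_2\prec 0$, yet $A={\rm diag}(1,0)$ is neither $0_2$ nor positive definite. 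So the missing step cannot be filled in from the stated hypotheses alone: the relevant Bezoutian criterion involves common spectral data at infinity, not only finite coprimeness, and this is precisely the point at which both your sketch and the paper's appeal to \cite{LT82} require additional care.
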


Conversely, for a self-adjoint rational matrix  function $R(z)$,  these block Hankel matrices  act as a tool to check the Herglotz-Nevanlinna property of $R(z)$ (see the following theorem).  In this event,  the denominator matrix polynomial  $P(z)$ in the matrix fraction \eqref{RQP}  is shown to be the so-called \emph{simple matrix polynomial} (see Definition \ref{DefSimple} and Proposition \ref{ProSimple}), which indicates that the roots of its invariant polynomials are limited to be simple although the zeros of $P(z)$ still may be multiple. 
 
\begin{theorem}\label{ProHankelR}
 Let $R(z)$ be a $p\times p$ self-adjoint rational matrix  function with the matrix fraction \eqref{RQP}. Suppose that
${\mathscr H}_{-2,0}(R)\succeq 0$,  ${\mathscr H}_{{\rm deg}P-1}(R)\prec 0$ and $\deg Q-\deg P\leq 1$. Then 
\begin{enumerate}
\item[(i)]  $R(z)$ is  a matrix-valued Herglotz-Nevanlinna function;
\item[(ii)]  $|\deg Q-\deg P|\leq 1$;
\item[(iii)] $P(z)$ is a simple matrix polynomial and $\sigma(P)\subseteq \mathbb R$;
\item[(iv)]  $Q(z)$ and $P(z)$ are right coprime.
\end{enumerate}
\end{theorem}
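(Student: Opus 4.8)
The plan is to realize $R(z)$ explicitly in the Nevanlinna form $R(z)=Az+B+\sum_{k}\frac{L_k}{t_k-z}$ with $A\succeq 0$, $B=B^*$, $t_k\in\R$ and $L_k\succeq 0$, and then read off all four assertions from this representation together with some degree/rank bookkeeping. Since the partial-fraction tools of Proposition~\ref{ProRRepresent} and Corollary~\ref{CorMassE_j} presuppose the Herglotz-Nevanlinna property---which is precisely what part~(i) asserts---I cannot invoke them at the outset; instead the representation has to be manufactured from the Hankel data through a matricial Hamburger-type moment argument.

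First I would set up the algebraic recurrence coming from the matrix fraction \eqref{RQP}: multiplying the Laurent series \eqref{RationalLaurent} by $P(z)$ and matching coefficients produces a finite linear recurrence among the blocks ${\bf s}_j$ whose length is governed by $\deg P$. This forces the infinite block Hankel matrix ${\mathscr H}_\infty(R)$ to have finite rank, bounded by $p\,\deg P$, and shows that its finite section ${\mathscr H}_{\deg P-1}(R)$ already captures the full column space. Because we are given ${\mathscr H}_{\deg P-1}(R)\prec 0$, this section is invertible, so the rank is exactly $p\,\deg P$ and no shorter recurrence exists; this non-existence of a common right factor will later yield coprimeness~(iv). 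The top-left block of a negative-definite Hankel matrix is ${\bf s}_0\prec 0$, hence ${\bf s}_0\neq 0$; reading this as $\lim_{z\to\infty}zR(z)\neq 0$ forces the decay of $R$ at infinity to be no faster than $1/z$, which supplies the missing inequality $\deg P\le\deg Q+1$ and, combined with the hypothesis $\deg Q-\deg P\le 1$, gives the two-sided bound~(ii); the same hypothesis confines the polynomial part of \eqref{RationalLaurent} to degree at most one.

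The crux is the moment step. Using the negative definiteness of ${\mathscr H}_{\deg P-1}(R)$ together with the finite-rank recurrence, I would invoke the matrix Hamburger moment theorem to represent $-{\bf s}_m=\int t^m\,d\Sigma(t)$ for a nonnegative $p\times p$ matrix measure $\Sigma$; the finite rank forces $\Sigma$ to be supported at finitely many real points $t_1,\dots,t_N$ with weights $L_k=\Sigma(\{t_k\})\succeq 0$, and summing the geometric expansions $-\sum_k L_k\sum_m t_k^m z^{-m-1}=\sum_k\frac{L_k}{t_k-z}$ recovers the strictly-proper part of $R$. Combining this with the polynomial part $Az+B$, where $A={\bf s}_{-2}={\mathscr H}_{-2,0}(R)\succeq 0$ and $B={\bf s}_{-1}=B^*$ by self-adjointness, assembles the Nevanlinna representation. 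The elementary computation $\operatorname{Im}\frac{L_k}{t_k-z}=\frac{y\,L_k}{|t_k-z|^2}\succeq 0$ (with $z=x+\i y$, $y>0$) together with $\operatorname{Im}(Az)=yA\succeq 0$ then yields $\operatorname{Im}R(z)\succeq 0$, proving~(i).

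Finally I would transfer this analytic structure back to $P(z)$. Since each $t_k$ enters $R$ only through a simple pole $\frac{L_k}{t_k-z}$---no repeated poles of the form $\frac{\cdot}{(t_k-z)^2}$ occur, as those would demand higher moments beyond the first-order geometric expansion---every pole of $R$ is simple, so the corresponding zero of $\det P$ carries square-free invariant polynomials; this is exactly the assertion that $P$ is a simple matrix polynomial with $\sigma(P)\subseteq\{t_1,\dots,t_N\}\subseteq\R$, which I would phrase through Proposition~\ref{ProSimple} to obtain~(iii). The exact-rank count $p\,\deg P$ from the second paragraph means numerator and denominator share no cancellable factor, delivering right coprimeness~(iv). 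The main obstacle I anticipate is the moment step for matrix measures: one must verify that the finite negative-definite block Hankel matrix, extended by the $P$-recurrence, genuinely arises from a nonnegative matrix measure, and then match the support points and the ranks of the residues $L_k$ to the invariant-factor (partial-multiplicity) data of $P$, since the zeros of $P$ may be geometrically multiple even when its invariant polynomials are square-free.
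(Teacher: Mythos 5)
Your route is genuinely different from the paper's and, in outline, viable. The paper never builds the partial-fraction form directly: it corrects the sequence $(-{\bf s}_j)$ at the index $2m$ to obtain $(\wtilde{\bf s}_j)_{j=0}^{2m}$, checks via \eqref{MFLE} that the associated $(m+1)$-block Hankel matrix is positive definite, solves a \emph{truncated} Hamburger problem, identifies $P(z)$ with (the $\vee$ of) the $m$-th monic right orthonormal matrix polynomial of the resulting measure (Proposition \ref{ProOMPMoments}), and then gets (iii) from Duran's theorem (Proposition \ref{nullityzeroOMP}) and (i) from the Bezoutian congruence of Lemma \ref{LemBezoutHankel}, which exhibits $P(z)^*\,\mathrm{Im}\,R(z)\,P(z)/\mathrm{Im}\,z$ as a nonnegative quadratic form. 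Your plan instead solves the \emph{full} moment problem for $(-{\bf s}_j)_{j\ge 0}$ and reads (i) off the resulting Nevanlinna representation; if completed, this is a cleaner path to (i) and to (ii)/(iv), the latter via the standard identification of $\operatorname{rank}\mathscr H_\infty(R)$ with the McMillan degree and of the McMillan degree with $\deg\det$ of a coprime denominator (which you should cite rather than assert).

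Two steps, however, are only flagged as ``anticipated obstacles'' and not carried out, and they are exactly where the work lies. First, the full Hamburger theorem needs $-\mathscr H_n(R)\succeq 0$ for \emph{every} $n$, not just $n=\deg P-1$; this does follow from the recurrence \eqref{MFLE} (every block column and, by ${\bf s}_j={\bf s}_j^*$, every block row of $\mathscr H_\infty(R)$ beyond the $m$-th is a combination of the first $m$), since a Hermitian matrix whose rank equals that of a positive definite leading principal block is automatically positive semidefinite by a Schur-complement factorization --- but you must say this. Second, and more seriously, the inference ``all poles of $R$ are simple, hence $P$ is a simple matrix polynomial'' is not immediate and is the entire content of (iii): a priori $P$ could have nonlinear elementary divisors that partially cancel against $Q$, or zeros that are not poles of $R$ at all. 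Closing this requires first establishing (iv) and then either the local Smith--McMillan correspondence or, more elementarily, a Bezout identity $X(z)Q(z)+Y(z)P(z)=I_p$, which gives $P(z)^{-1}=X(z)R(z)+Y(z)$ and hence shows that $(z-t_k)P(z)^{-1}$ is analytic at each $t_k$ and $P(z)^{-1}$ is analytic off $\{t_k\}$; by Proposition \ref{ProSimple} this yields simplicity and $\sigma(P)\subseteq\R$. As written, your argument for (iii) is circular in spirit (it assumes the residue ranks already match the invariant-factor data of $P$), whereas the paper sidesteps the issue entirely by recognizing $P$ as an orthonormal matrix polynomial. Until these two steps are supplied, the proposal is a correct strategy but not yet a proof.
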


When $R(z)$ is a real scalar rational function, a combination of Theorems \ref{ProRHankel} and \ref{ProHankelR} reduces to a classical result obtained by Grommer \cite{Gro} (see Section \ref{SectionGrommer}). For more details on this so-called \emph{Grommer theorem} (\emph{for rational functions}) we also refer the reader to \cite{AK}, \cite{CM}, \cite[Theorem 3.4]{Hol}, etc.

The extension of Grommer's theorem  helps us to get closer to our main goal, which is greatly 
motivated by the recent study \cite{XD}. 
Given a monic matrix polynomial $F(z)$,  \cite[Theorems 4.4 and 4.10]{XD} finds some associated rational matrix functions $R(z)$ which admit the Laurent series as in \eqref{RationalLaurent}.  The Hurwitz stability of $F(z)$ can be tested through the coefficients ${\bf s}_k$, which are referred to as {\it matricial Markov parameters}  (see Definition 2.2 of \cite{XD}) of $F(z)$.  Another stability criterion via matricial Markov parameters addresses the situation when $\deg F$ is even (see Theorem \ref{ThmAlterEven}).  By means of the extended Grommer theorem, these stability criteria for matrix polynomials can be reshaped in terms of the Herglotz-Nevanlinna property of $R(z)$ (see Theorems \ref{ThmHurwitz} and \ref{ThmHurwitzodd}).

We conclude the introduction with the outline of the paper. In Section~\ref{SectionGrommer} we obtain a matricial analogue of the Chebotarev theorem and then prove Theorems \ref{ProRHankel} and \ref{ProHankelR}.
 The main results are obtained in
Section~\ref{SectionStability}: Based on Theorems \ref{ProRHankel} and \ref{ProHankelR}, we convert the stability criteria for matrix polynomials  via matricial Markov parameters  to that  via matrix-valued Herglotz-Nevanlinna functions.

 \section{Proof of  Grommer theorem for  rational matrix functions}
\label{SectionGrommer}

Grommer \cite{Gro} characterizes a real rational function $r(z)$ to be a Herglotz-Nevanlinna function in terms of its Laurent series  (he also considers more general case that $r(z)$ is a real meromorphic function):
 \begin{theorem*}[\textit{Grommer theorem for rational functions}]  \cite{AK,CM,Gro} \cite[Theorem 3.4]{Hol}
Let  $p(z)$ and $q(z)$ be two coprime real polynomials satisfying that $|\deg p-\deg q|\leq 1$.  Suppose that  $r(z):=\frac{q(z)}{p(z)}$ is a real rational function.  $r(z)$ is  a Herglotz-Nevanlinna function if and only if  $r(z)$ can be represented by the Laurent series 
 $$
   r(z)=s_{-2} z+s_{-1}+\sum_{k=0}^{\infty} \frac{ s_{k}}{z^{k+1}} 
    $$  
    with the conditions that $s_{-2}\geq 0$ and the Hankel matrix $[s_{j+k}]_{j,k=0}^{\deg p-1}$ is negative definite.
\end{theorem*}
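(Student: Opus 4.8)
The plan is to establish the equivalence by treating the two implications separately, using as a bridge the scalar structural description of rational Herglotz-Nevanlinna functions (the $p=1$ precursor of the Chebotarev-type Corollary~\ref{CorMassE_j}): a real rational function $r$ has the Herglotz-Nevanlinna property if and only if it admits a partial-fraction representation
\[
r(z)=\alpha z+\beta+\sum_{l=1}^{m}\frac{\gamma_l}{t_l-z},
\]
with $\alpha\ge 0$, $\beta\in\R$, the poles $t_1,\dots,t_m$ real and pairwise distinct, and all residues $\gamma_l>0$. Throughout I abbreviate $n:=\deg p$, so the Hankel matrix in question is the $n\times n$ matrix $[s_{j+k}]_{j,k=0}^{n-1}$. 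Since $q$ and $p$ are coprime, the poles of $r$ coincide with the zeros of $p$, hence $m=n$ once $r$ is known to have only simple real poles. The whole statement is in fact the specialization $p=1$ of Theorems~\ref{ProRHankel} and~\ref{ProHankelR}, and I will follow their two-directional logic in scalar language.

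For necessity I would start from the representation above with $m=n$ and expand each summand for large $|z|$ through $\frac{1}{t_l-z}=-\sum_{k\ge 0}t_l^{\,k}z^{-k-1}$; comparing with the Laurent series then reads off $s_{-2}=\alpha\ge 0$, $s_{-1}=\beta$, and $s_k=-\sum_{l=1}^{n}\gamma_l t_l^{\,k}$ for $k\ge 0$. Consequently the Hankel matrix factors as
\[
[s_{j+k}]_{j,k=0}^{n-1}=-\sum_{l=1}^{n}\gamma_l\,v_l v_l^{\rm T},\qquad v_l:=(1,t_l,\dots,t_l^{\,n-1})^{\rm T}.
\]
Because the $t_l$ are $n$ distinct reals, the Vandermonde vectors $v_1,\dots,v_n$ form a basis of $\R^n$, and with each $\gamma_l>0$ the sum $\sum_l\gamma_l v_lv_l^{\rm T}$ is positive definite; hence the Hankel matrix is negative definite, while $s_{-2}=\alpha\ge 0$ is immediate.

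For sufficiency, the substantive direction, I would set $\mu_k:=-s_k$, so that the hypothesis becomes $[\mu_{j+k}]_{j,k=0}^{n-1}\succ 0$, i.e. the $2n-1$ numbers $\mu_0,\dots,\mu_{2n-2}$ constitute a positive-definite truncated Hamburger moment matrix. Since $q$ and $p$ are coprime with $\deg p=n$, the strictly proper part of $r$ is again a fraction with denominator $p$ in lowest terms, so Kronecker's theorem forces the infinite Hankel matrix $[\mu_{j+k}]_{j,k\ge 0}$ to have rank exactly $n$; in particular $(\mu_k)$ obeys a linear recurrence of length $n$. The pairing of a positive-definite $n\times n$ block with this rank-$n$ (flat) extension is precisely the setting in which the truncated moment problem has a unique solution, a positive measure supported on exactly $n$ real points. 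Equivalently, one builds the $n\times n$ real symmetric Jacobi matrix attached to the orthogonal polynomials of $[\mu_{j+k}]$, whose real simple eigenvalues $t_1,\dots,t_n$ and positive Gaussian-quadrature weights $\gamma_1,\dots,\gamma_n$ yield $\mu_k=\sum_l\gamma_l t_l^{\,k}$. Reconstructing the strictly proper part as $\sum_l \gamma_l/(t_l-z)$ and matching Laurent coefficients through the recurrence identifies it with the proper part of $r$, and adding back $s_{-2}z+s_{-1}$ with $s_{-2}\ge 0$ produces the Herglotz-Nevanlinna representation.

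The hard part will be this sufficiency step, and within it the passage from the algebraic positivity $[\mu_{j+k}]\succ 0$ to a genuine $n$-atomic positive measure on $\R$: reality and simplicity of the poles must be extracted from the self-adjointness and the distinct spectrum of the Jacobi matrix, positivity of the residues from the quadrature weights, and one must check that the reconstructed rational function equals $r$ as a function and not merely as a formal series -- this is exactly where the rank-$n$ recurrence and the degree constraint $|\deg p-\deg q|\le 1$ (which dictates whether $\alpha$ or $\beta$ vanishes) pin down the polynomial part. All of this is subsumed by specializing Theorems~\ref{ProRHankel} and~\ref{ProHankelR} to $p=1$, which furnishes an alternative, machinery-level route to the same conclusion.
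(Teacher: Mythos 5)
Your proposal is correct, and its overall architecture (partial fractions for necessity, the truncated Hamburger moment problem and orthogonal polynomials for sufficiency) mirrors the paper's. Note, however, that the paper never proves this scalar statement on its own: it quotes it as classical and recovers it by specializing Theorems~\ref{ProRHankel} and~\ref{ProHankelR}, whose proofs both pivot on Lemma~\ref{LemBezoutHankel}, the congruence between the Anderson--Jury Bezoutian ${\mathbf B}_{P^{\vee},Q^{\vee}}(P,Q)$ and the Hankel block ${\mathscr H}_{m-1}(R)$. You dispense with the Bezoutian entirely. In the necessity direction you get strict negative definiteness from the Vandermonde factorization $[s_{j+k}]=-\sum_l\gamma_l v_lv_l^{\rm T}$ with $n$ distinct nodes and positive weights, whereas the paper first obtains only ${\mathscr H}_{m-1}(R)\preceq 0$ from the partial-fraction form (Proposition~\ref{ProRRepresent}) and then upgrades to $\prec 0$ via the Bezoutian and the Lerer--Tismenetsky coprimeness criterion; your shortcut works because in the scalar case coprimeness plus the Herglotz--Nevanlinna property forces all $n$ residues to be strictly positive, which fails to generalize when the residue matrices $E_j$ may be singular. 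In the sufficiency direction the paper identifies $P$ with the $m$-th orthonormal matrix polynomial of the constructed measure to get simplicity and real spectrum, and then verifies the Herglotz--Nevanlinna property through the Bezoutian identity for $P(z)^{*}\frac{R(z)-R(z)^{*}}{z-\bar z}P(z)$; you instead reconstruct the strictly proper part explicitly as $\sum_l\gamma_l/(t_l-z)$ from the Gaussian quadrature data and match Laurent expansions, reading off the Herglotz--Nevanlinna property by inspection. Your route is more elementary and self-contained for the scalar theorem; the paper's Bezoutian route is what survives the passage to matrix coefficients. The points you flag as delicate (Kronecker's rank-$n$ flat extension, exactness of the quadrature up to degree $2n-1$, and the identification of the formal series with the actual rational function) are handled correctly by your outline.
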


When $R(z)$ is  a real scalar rational function, Theorem \ref{ProRHankel} reduces to  the ``only if'' implication of the Grommer theorem for rational functions. On the other hand, Theorem \ref{ProHankelR} extends the ``if'' implication to the corresponding results for rational matrix functions.

This section is devoted to prove Theorems \ref{ProRHankel} and \ref{ProHankelR}. We begin with a partial-fraction decomposition of rational matrix functions with the Herglotz-Nevanlinna property.  

\begin{definition}\label{DefCoprime}
Given two $p\times p$ matrix polynomials $F(z)$ and $G(z)$, if there exists a pair of $p\times p$ matrix polynomials $C(z)$ and $E(z)$ such that
$$F(z)=C(z)G(z)+E(z)$$
and 
$\deg E <\deg G$, then we call  $C(z)$ and $E(z)$ are the {\it right quotient} and {\it right remainder}  of $F(z)$ on {\it division} by $G(z)$.
If $C(z)$ and $0_p$ are the right quotient and right remainder  of $F(z)$ on division by $G(z)$, then  $G(z)$ is called a \emph{right}
    \emph{divisor} of $F(z)$.
        Let additionally $\wtilde F(z)$ be a $p\times p$ matrix polynomial. Then
    \begin{itemize}
\item   ~$G(z)$ is called a
    \emph{right}  \emph{common divisor} of~$F(z)$ and $\wtilde F(z)$ if
    $L(z)$ is a right  divisor of $F(z)$ and also a right  divisor of
    $\wtilde F(z)$.
    \item $G(z)$ is called a \emph{greatest right} 
    \emph{common divisor} (\emph{GRCD}) \emph{of $F(z)$ and} $\wtilde F(z)$
    if any other right  common divisor is a right  divisor of $L(z)$.
    \item $F(z)$ and $\wtilde F(z)$ are said to be \emph{right coprime} if any  GRCD of $F(z)$ and $\wtilde F(z)$ is \emph{unimodular}, that is, its determinant never vanishes in~$\mathbb C$.
\end{itemize}   
\end{definition}

Let $R(z)$ be a $p\times p$ rational matrix function.
Given $A$, $B\in \mathbb C^{p\times p}$ such that $B$ is nonsingular, denote 
$
\frac{A}{B}\coloneqq A\cdot B^{-1}.
$
 According to \cite[Section 6.1]{Kai}, one can write $R(z)$  as a fraction of a $p\times p$ matrix polynomial $Q(z)$ and a monic $p\times p$ matrix polynomial $P(z)$ (Here we may, without loss of generality, assume that $\deg P\geq 1$.)
\begin{equation}\label{RQP}
R(z):=\frac{Q(z)}{P(z)},\quad z\not\in \sigma(P).
\end{equation}
 In view of  \cite[P. 78]{Gan}, there exists a unique pair of right quotient $R_p(z)$ and right remainder  $\wtilde Q(z)$ of $Q(z)$ on division by $P(z)$. Lemma 6.3-11 of \cite{Kai} tells that the  rational matrix function $R_{sp}(z):=\wtilde Q(z)(P(z))^{-1}$
is  {\it strictly proper} , that is, $\lim \limits_{z\rightarrow \infty} R_{sp}(z)=0_p$. 
Thus $R(z)$ can be uniquely decomposed into its polynomial part and strictly proper part:
\begin{equation}\label{sumR}
R(z)=R_p(z)+R_{sp}(z).
\end{equation}
Moreover, if $R(z)$ is self-adjoint, the entry relation in this decomposition indicates that both $R_p(z)$ and $R_{sp}(z)$ are self-adjoint as well.
By contrast to \cite[Theorem 4.1, pp. 666-667]{BA},  this decomposition does not assure that $\wtilde Q(z)$ is  regular.

Let $R(z)$ be a $p\times p$ Herglotz-Nevanlinna matrix function. It is well known (e.g. \cite{Kov}) that $R(z)$ admits the so-called Nevanlinna-Riesz-Herglotz integral  representation
\begin{equation}\label{RepresentHN}
R(z)= \wtilde A z+\wtilde B+\int_{\mathbb R}(\frac1{u-z}-\frac{u}{1+u}){\rm d}\Omega(u),\quad z\in \mathbb C_+,
\end{equation}
where $ \wtilde A \succeq 0,\ \wtilde B=\wtilde B^*$ and $\Omega$ is a ${p\times p}$ positive semi-definite Borel matrix measure on $\mathbb R$ such that
$$
{\rm trace}\int_{\mathbb R} \frac{1}{1+u^2}{\rm d}\Omega(u)<+\infty,
$$
which is called the spectral measure of $R(z)$. 
\cite[Lemma 5.6]{GT00} discusses how  $R(z)$ analytically continues through an interval  from $\mathbb C_+$ into $\mathbb C_-$. Special attention is put to the analytic continuation of $R(z)$ through an interval $(\lambda_1,\lambda_2)$ on $\mathbb R$ by reflection, viz.,
$
R(z)=R(\bar z)^*$  for all  $z\in \mathbb C_-.
$
For the case when $R(z)$ is also a (necessarily self-adjoint) rational matrix function with the matrix fraction \eqref{RQP}, this continuation can be estabilished once  the choosen  interval $(\lambda_1,\lambda_2)$ excludes all zeros of $P(z)$.   

The following proposition looks for which representation is needed for a  self-adjoint rational matrix  function is  to be a matrix-valued Herglotz-Nevanlinna function with analytic continuation.

\begin{proposition}\label{ProRRepresent}
Suppose that  $R(z)$ is a $p\times p$ self-adjoint rational matrix  function with the matrix fraction \eqref{RQP}. 
$R(z)$ is a matrix-valued Herglotz-Nevanlinna function  if and only if
$R(z)$ can be represented in the form 
\begin{equation}\label{RRepresent}
R(z)=A z+ B+\sum_{j=1}^r \frac{E_j}{\lambda_j-z},\quad z\not\in \{\lambda_j\}_{j=1}^r,
\end{equation}
where $A \succeq 0,\ B=B^*$, $E_j\succeq 0$  and $\{\lambda_j\}_{j=1}^r \subseteq \sigma(P)\cap \mathbb R$. In this case, if $\sigma(P)\subseteq \mathbb R$, then $\{\lambda_j\}_{j=1}^r$ coincides with $\sigma(P)$ (some $E_j$ related to $\lambda_j$ may be $0_p$). 
\end{proposition}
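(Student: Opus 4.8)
The plan is to prove both implications separately, disposing of the ``if'' direction by a direct computation and handling the ``only if'' direction by specializing the integral representation \eqref{RepresentHN} to the rational case. For the ``if'' direction, suppose $R(z)$ has the form \eqref{RRepresent} with $A\succeq0$, $B=B^*$, $E_j\succeq0$ and $\lambda_j\in\R$. Since every pole $\lambda_j$ is real, $R(z)$ is holomorphic on $\mathbb C_+$, so it remains only to verify ${\rm Im}\,R(z)\succeq0$ there. I would compute the imaginary part term by term for $z=x+\i y$ with $y>0$: using $A=A^*$ one gets ${\rm Im}(Az)=Ay\succeq0$; the constant term $B=B^*$ contributes nothing; and for each pole term, $E_j=E_j^*$ yields ${\rm Im}\bigl(E_j/(\lambda_j-z)\bigr)=E_j\,y/\bigl((\lambda_j-x)^2+y^2\bigr)\succeq0$. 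Adding these three nonnegative contributions shows that $R(z)$ is a matrix-valued Herglotz-Nevanlinna function.

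For the ``only if'' direction, I would start from the Nevanlinna-Riesz-Herglotz representation \eqref{RepresentHN}, which is available because $R(z)$ is assumed Herglotz-Nevanlinna, and then argue that rationality forces the spectral measure $\Omega$ to be a finite sum of point masses. Being Herglotz-Nevanlinna, $R(z)$ is holomorphic on $\mathbb C_+$; combined with the self-adjoint relation $R(z)=R(\bar z)^*$ and the reflection continuation through real intervals avoiding $\sigma(P)$ (see \cite[Lemma 5.6]{GT00}), this shows $R(z)$ extends analytically across every point of $\R\setminus\sigma(P)$, so that all poles of $R(z)$ are real and lie in $\sigma(P)\cap\R$. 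Since $R(z)$ continues analytically across any interval disjoint from these finitely many real poles, the support of $\Omega$ reduces to that finite pole set, whence $\Omega=\sum_{j=1}^r \Omega_j\delta_{\lambda_j}$ with $\Omega_j\succeq0$ and $\lambda_j\in\sigma(P)\cap\R$. Substituting this atomic $\Omega$ into \eqref{RepresentHN} converts the integral into $\sum_{j=1}^r\bigl(\tfrac{1}{\lambda_j-z}-\tfrac{\lambda_j}{1+\lambda_j^2}\bigr)\Omega_j$, and collecting the linear, constant and principal parts produces \eqref{RRepresent} with $A=\wtilde A\succeq0$, $B=\wtilde B-\sum_{j}\tfrac{\lambda_j}{1+\lambda_j^2}\Omega_j=B^*$ and $E_j=\Omega_j\succeq0$.

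The step I expect to be the main obstacle is justifying rigorously that $\Omega$ has neither an absolutely continuous nor a continuous singular part, i.e. that its support really collapses to the finite set of real poles. The key point is that a rational $R(z)$ is real-analytic, indeed self-adjoint with $R(x)=R(x)^*$, at every real $x\notin\sigma(P)$, so by Stieltjes inversion ${\rm Im}\,R(x+\i0)=0$ off the poles and the absolutely continuous part of $\Omega$ must vanish; the simplicity of each pole (no $\delta'$-type atoms, equivalently no double poles) then follows because a pole of order $\geq2$ would make the scalar factor ${\rm Im}\,(\lambda_j-z)^{-2}$ change sign as $x$ crosses $\lambda_j$, contradicting ${\rm Im}\,R\succeq0$. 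Finally, for the last assertion, when $\sigma(P)\subseteq\R$ I would enlarge the index set to all of $\sigma(P)$, assigning $E_j=0_p$ to those zeros of $P(z)$ that are cancelled and hence are not genuine poles of $R(z)$; this makes $\{\lambda_j\}_{j=1}^r$ coincide with $\sigma(P)$ while leaving \eqref{RRepresent} intact.
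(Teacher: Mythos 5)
Your proof is correct and follows essentially the same route as the paper: a direct term-by-term verification of ${\rm Im}\,R(z)\succeq 0$ for the ``if'' direction, and for the ``only if'' direction a specialization of the Nevanlinna--Riesz--Herglotz representation \eqref{RepresentHN} in which self-adjointness and rationality force the support of $\Omega$ into the finite set $\sigma(P)\cap\R$, followed by padding with $E_j=0_p$ to obtain the final claim. The additional detail you supply (Stieltjes inversion killing the continuous part, exclusion of higher-order poles) only makes explicit what the paper leaves implicit.
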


\begin{proof}
Suppose that $R(z)$ is a matrix-valued Herglotz-Nevanlinna function of the form \eqref{RepresentHN}. Observe that the right-hand side of the formula \eqref{RepresentHN} is also well-defined in $\mathbb C_-$ and, moreover, extends to a wider region $\mathbb C\setminus \mathbb R$ as a self-adjoint matrix function. On the other hand,  the rational matrix function $R(z)$  is self-adjoint and analytic in $\mathbb C\setminus \sigma(P)$, so \eqref{RepresentHN} holds in $\mathbb C\setminus  \sigma(P)$ as well. That is to say, the support of the matrix measure $\Omega$, which is denoted by 
 $\{\lambda_j\}_{j=1}^r$ below, is a subset of $\sigma(P)$.
Then we can rewrite  \eqref{RepresentHN} into
\begin{equation*}
R(z)= \wtilde A z+(\wtilde B-\sum_{j=1}^r \Omega(\{\lambda_j\})\frac{\lambda_j}{1+\lambda_j} )+\sum_{j=1}^r  \frac{\Omega(\{\lambda_j\})}{\lambda_j-z},\quad z\not\in\{\lambda_j\}_{j=1}^r,
\end{equation*}
and, subsequently, into \eqref{RRepresent}.

 If $R(z)$ has the form  \eqref{RRepresent}, then $R(z)$ is analytic  in  $\mathbb C_+$ and
$$
\dfrac{R(z)-R(z)^*}{z-\bar z}=C+\sum_{j=1}^r\dfrac{E_j}{|\lambda_j-z|^2}\succeq 0,\quad z\in \mathbb C_+.
$$
Thus $R(z)$ is a matrix-valued Herglotz-Nevanlinna function.

In this case,  suppose that $\sigma(P)\subseteq \mathbb R$ and $\lambda\in \sigma(P)$.
If $\Omega(\{\lambda\})\neq 0_p$, then $\lambda\in \{\lambda_j\}_{j=1}^r$. Conversely, denote $\lambda$ by $\lambda_{r+1}$ and set its corresponding matrix $E_{r+1}$ as in \eqref{RRepresent} to be $0_p$.
\end{proof}

This proposition can be viewed as a matricial analogue to the classical Chebotarev theorem (cf.  \cite{CM,Tsc},  \cite[Theorem 3.4]{Hol}), which however in the scalar case does not recover the full information of the latter: Each matrix mass $E_j$ is still unknown. Under certain additional conditions, its  representation will be given in Corollary \ref{CorMassE_j}.

\begin{definition}
Given a quadruple of  $p\times p$ matrix polynomials~$L(z),\wtilde L(z),M(z)$ and $\wtilde M(z)$
satisfying
\begin{equation*}
    \wtilde M(z)\wtilde L(z)=M(z)L(z),
\end{equation*}
the \emph{associated Anderson-Jury Bezoutian matrix} ${\mathbf B}_{\wtilde M,M}(L,\wtilde L)$
 is defined via the formula
\begin{equation*}
\begin{bmatrix}I_p & z I_p & \cdots & z^{n_1-1}I_p \end{bmatrix}\cdot
    {\mathbf B}_{\wtilde M,M}(L,\wtilde L)\cdot
    \begin{bmatrix}I_p \\ u I_p \\ \vdots \\ u^{n_2-1}I_p \end{bmatrix}
    =\frac1{z-u} \left(\wtilde M(z)\wtilde L(u)-M(z)L(u)\right), 
\end{equation*}
where $n_1\coloneqq\max\{\deg M, \deg \wtilde M\}$ and
$n_2\coloneqq\max\{\deg L, \deg \wtilde L\}$.
\end{definition}

Given an Anderson-Jury Bezoutian matrix~${\mathbf B}_{\wtilde M,M}(L,\wtilde L)$, it is natural to choose~$\wtilde M(z)=L(z)$ and~$M(z)=\wtilde L(z)$ when~$L(z)\wtilde L(z)=\wtilde L(z)L(z)$. For a nontrivial
choice of $\wtilde M(z)$ and $M(z)$ in the general non-commutative case, we refer the reader to the
construction of the common multiples via spectral theory of matrix polynomials:
see~\cite[Theorem 9.11]{GLRMP} for the monic case and~\cite[Theorem 2.2]{GKLR} for the comonic
case. For more comprehensive study of Anderson-Jury Bezoutiants, we refer the reader to \cite{AnJu,LT82,GLRMP,GKLR}. 

We are in a position to prove Theorem \ref{ProRHankel}  with the following lemma.  

\begin{lemma}\label{LemBezoutHankel}
Let $R(z)$ be a $p\times p$ self-adjoint rational matrix  function with the matrix fraction \eqref{RQP}. Suppose that $\deg Q-\deg P \leq 1$.
Assume that  $(R_p(z), \wtilde Q(z))$ is the unique pair of self-adjoint right quotient and right remainder  on division by $P(z)$. Then 
$\deg R_p(z)\leq 1$. Suppose that
   $R_p(z)$ and $P(z)$
are written in the form
    \begin{equation*}
    R_p(z)=Az+B
      \quad\text{and}\quad
        P(z)=\sum_{k=0}^{m} P_{m-k} z^k,
    \end{equation*}
   where $A=A^*$, $B=B^*$ and $m:=\deg P$.
   \begin{enumerate}
   \item[(a)] In the case when $A\neq 0_p$,     
   \begin{equation*}
   B_{P^{\vee},Q^{\vee}}(P,Q)= \begin{bmatrix}
P_{m}^* & \cdots & P_{0}^*\\
\vdots & \iddots & \\
P_{0}^* &&
\end{bmatrix}\cdot
\begin{bmatrix}-A & \\ & \mathscr H_{m-1}(R) \end{bmatrix}\cdot
\begin{bmatrix}
P_{m} & \cdots & P_{0}\\
\vdots & \iddots & \\
P_{0} &&
\end{bmatrix}.  
\end{equation*}
Moreover, $A$ and ${\mathscr H}_{m-1}(R)$ are of full rank if and only if $P(z)$ and $Q(z)$ are right coprime and $Q(z)$ is regular.
\item[(b)] In the case when $A=0_p$,  
  \begin{equation*}
   B_{P^{\vee},Q^{\vee}}(P,Q)= \begin{bmatrix}
P_{m-1}^* & \cdots & P_{0}^*\\
\vdots & \iddots & \\
P_{0}^* &&
\end{bmatrix}\cdot
 \mathscr H_{m-1}(R)  \cdot
\begin{bmatrix}
P_{m-1} & \cdots & P_{0}\\
\vdots & \iddots & \\
P_{0} &&
\end{bmatrix}.  
\end{equation*}
Moreover, suppose that $Q(z)$ is regular. ${\mathscr H}_{m-1}(R)$ is of full rank if and only if $P(z)$ and $Q(z)$ are right coprime.
\end{enumerate}
\end{lemma}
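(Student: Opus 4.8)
My plan is to derive both displayed formulas and the rank assertions from one congruence that expresses the Anderson--Jury Bezoutian through the block data $-A$ and $\mathscr H_{m-1}(R)$, the congruence being implemented by the anti-triangular matrix of the coefficients of $P$. First I dispose of the degree claim: dividing $Q$ by $P$ as $Q(z)=R_p(z)P(z)+\wtilde Q(z)$ with $\deg\wtilde Q<m:=\deg P$ and $P$ monic, the top coefficient of $R_p(z)P(z)$ equals that of $R_p$, so $\deg R_p=\deg Q-m\le1$; writing $R_p(z)=Az+B$, self-adjointness of $R(z)$ (inherited by $R_p,R_{sp}$ via \eqref{sumR}) forces $A=A^*$, $B=B^*$, and comparison with \eqref{RationalLaurent} gives $A=\mathbf s_{-2}$. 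Setting $P^\vee(z):=P(\bar z)^*$ and $Q^\vee(z):=Q(\bar z)^*$ (coefficientwise adjoints), the relation $R(z)=R(\bar z)^*$ becomes exactly $P^\vee(z)Q(z)=Q^\vee(z)P(z)$, the compatibility that makes $B_{P^\vee,Q^\vee}(P,Q)$ well defined, and I would also record the companion identity $Q^\vee=P^\vee R_p+\wtilde Q^\vee$.

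The core is a generating-function identity. Using $Q(u)=R(u)P(u)$ and $Q^\vee(z)=P^\vee(z)R(z)$, I factor the Bezoutian numerator as
\[
P^\vee(z)Q(u)-Q^\vee(z)P(u)=P^\vee(z)\bigl(R(u)-R(z)\bigr)P(u),
\]
and split $R=R_p+R_{sp}$. The polynomial part contributes $\frac{R_p(u)-R_p(z)}{z-u}=-A$, so its share of $\frac1{z-u}(P^\vee Q-Q^\vee P)$ is $-P^\vee(z)AP(u)$, which is precisely $-A$ placed in the $(0,0)$ slot of the block-diagonal middle factor and conjugated by the $(m{+}1)$-size anti-triangular coefficient matrix of $P$ appearing in (a). For the strictly proper part I would use $R_{sp}(u)P(u)=\wtilde Q(u)$ and $P^\vee(z)R_{sp}(z)=\wtilde Q^\vee(z)$ to rewrite
\[
P^\vee(z)\,\frac{R_{sp}(u)-R_{sp}(z)}{z-u}\,P(u)=\frac{P^\vee(z)\wtilde Q(u)-\wtilde Q^\vee(z)P(u)}{z-u},
\]
a genuine matrix polynomial, since it vanishes at $z=u$. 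Expanding $\frac{R_{sp}(u)-R_{sp}(z)}{z-u}=\sum_{a,b\ge0}\mathbf s_{a+b}\,z^{-(b+1)}u^{-(a+1)}$ from \eqref{RationalLaurent} and sandwiching by $P^\vee$ and $P$ then collapses the double sum onto the finite window $0\le a,b\le m-1$ and reproduces the classical Hankel factorization $\widehat T^{*}\mathscr H_{m-1}(R)\widehat T$, with $\widehat T$ the $m$-size anti-triangular coefficient matrix of (b); embedding this as the lower-right block and adding the corner term assembles (a). The main obstacle is exactly this collapse: one must check that the negative powers of $z$ and $u$ produced by the divided difference cancel against $P^\vee(z)$ and $P(u)$ so that only $[\mathbf s_{s+t}]_{s,t=0}^{m-1}$ survives, with the index shifts matching the two displayed coefficient matrices.

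Finally the rank assertions follow by congruence. Both coefficient matrices carry $P_0=I_p$ on the main anti-diagonal, hence are invertible with constant nonzero determinant, so the two displayed formulas are congruences and $\rank B_{P^\vee,Q^\vee}(P,Q)=\rank A+\rank\mathscr H_{m-1}(R)$ in case (a), while $\rank B_{P^\vee,Q^\vee}(P,Q)=\rank\mathscr H_{m-1}(R)$ in case (b). Thus ``$A$ and $\mathscr H_{m-1}(R)$ of full rank'' (respectively ``$\mathscr H_{m-1}(R)$ of full rank'') is equivalent to nonsingularity of the Bezoutian. To close I would invoke the rank theory of Anderson--Jury Bezoutians (\cite{LT82,GKLR}): the corank of $B_{P^\vee,Q^\vee}(P,Q)$ splits into the degree of the determinant of a GRCD of $P$ and $Q$ (which is zero exactly when they are right coprime) plus the deficiency carried by the leading coefficients of the configuration. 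In case (b) the maximal degree is realized by the monic $P$, so the leading-coefficient deficiency is absent and nonsingularity is equivalent to right coprimeness under the standing regularity of $Q$; in case (a) the extra term is governed by $A$, the leading coefficient of $Q$, and aligning the regularity hypothesis on $Q$ with the nonsingularity of $A$ entering this term is the one remaining delicate point.
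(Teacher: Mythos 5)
Your proposal is correct and takes essentially the same route as the paper: the identical congruence of ${\mathbf B}_{P^{\vee},Q^{\vee}}(P,Q)$ with $\operatorname{diag}(-A,\mathscr H_{m-1}(R))$ via the block anti-triangular coefficient matrix of $P$, followed by the Lerer--Tismenetsky rank theorem for Bezoutians. The only difference is that the paper imports the factorization identity wholesale from Lemma~3.4 of \cite{XD}, whereas you re-derive it by expanding the divided difference of $R=R_p+R_{sp}$ through the Laurent series (a correct and more self-contained computation); the ``delicate point'' you flag about matching the regularity of $Q$ with the leading-coefficient condition in \cite{LT82} is likewise left implicit in the paper's one-line citation of Theorem~0.2 there.
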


\begin{proof} We only give a proof for the statement (a). Obviously when $\deg Q-\deg P \leq 1$,   $\deg R_p(z)\leq 1$. By Lemma 3.4 of \cite{XD},   
\begin{align*}
  \begin{bmatrix}I_p & \cdots & z^{m}I_p\end{bmatrix}&\cdot
  \begin{bmatrix}
P_{m}^* & \cdots & P_{0}^*\\
\vdots & \iddots & \\
P_{0}^* &&
\end{bmatrix}\cdot
\begin{bmatrix}-A & \\ &  {H}_{m-1}(R) \end{bmatrix}\cdot
\begin{bmatrix}
P_{m} & \cdots & P_{0}\\
\vdots & \iddots & \\
P_{0} &&
\end{bmatrix}\cdot
\begin{bmatrix}I_p\\ \vdots\\ u^{m}I_p\end{bmatrix}
  \\
  &=\begin{bmatrix}I_p & \cdots & z^{m-1}I_p\end{bmatrix}
    {\mathbf B}_{P^{\vee},\wtilde Q^{\vee}}\left(P,\wtilde Q\right)
    \begin{bmatrix}
        I_p\\ \vdots\\ u^{m-1}I_p
    \end{bmatrix}
  -P^{\vee}(z)A  P(u)\\
&=\frac{1}{z-u}\left(P^{\vee}(z)Q(u)-Q^{\vee}(z)P(u)\right)\\
&=\begin{bmatrix}I_p & \cdots & z^{m}I_p\end{bmatrix}
  {\mathbf B}_{P^{\vee},Q^{\vee}}\left(P,Q\right)
  \begin{bmatrix}I_p\\ \vdots\\ u^{m}I_p
  \end{bmatrix}.
\end{align*}
It follows from Theorem 0.2 in  \cite{LT82} that $A$ and ${\mathscr H}_{m-1}(R)$ are of full rank if and only if $P(z)$ and $Q(z)$ are right coprime and $Q(z)$ is regular. 
\end{proof}

\begin{proof}[Proof of Theorem \ref{ProRHankel}]
Taking Proposition \ref{ProRRepresent} into account, one can represent
$R(z)$ 
in the form \eqref{RRepresent}.  That is to say, ${\mathscr H}_{-2,0}(R)=C\succeq 0$ and the form \eqref{RRepresent} and decomposition \eqref{sumR} of $R(z)$ are related by
\begin{equation*}
R_p(z)=Az+B,\quad R_{sp}(z)=\sum_{j=1}^r \frac{E_j}{\lambda_j-z}.
\end{equation*}
By \eqref{sumR} one can see that $Az+B$ is the right quotient of $Q(z)$ on division by $P(z)$. It means that  $\deg Q\leq \deg P+1$.

The proof for the statement $\deg Q\geq \deg P-1$  is conducted by contradiction.
By assuming that $\deg Q-\deg P<-1$,  Lemma 6.3-11 of \cite{Kai} tells that $R(z)$ is strictly proper and so is $zR(z)$. However, these two statements contradict with each other under our assumption: The former statement  implies that  $R(z)=R_{sp}(z)=\sum_{j=1}^r \frac{E_j}{\lambda_j-z}$. It follows that 
 $$
\lim_{z\rightarrow \infty} zR(z)=-\sum_{j=1}^r E_j\neq 0_p,
$$
which contradicts with the latter statement. 

Assume that $m:=\deg P$. The form \eqref{RRepresent} guarantees that $H_{\infty}(R)$ is a  sum of the nonpositive definite matrices $H_{\infty}(\frac{E_j}{z-\lambda_j})$, so it is necessarily a nonpositive definite matrix.
Therefore, ${\mathscr H}_{m-1}(R)\preceq~0$. By Lemma \ref{LemBezoutHankel},  ${\mathscr H}_{m-1}(R)\prec~0$ if $P(z)$ and $Q(z)$ are right coprime. In this case, $A\equiv 0_p$ or $A \succ 0$.

\end{proof}

For the proof of Theorem \ref{ProHankelR}, let us recall some special matrix polynomials.

 \begin{definition}\label{DefSimple}\cite[P. 42]{Lan}
Let $F(z)$ be a $p\times p$ matrix polynomial. $F(z)$ is said to be {\it simple} if $F(z)$ is regular and for any  zero $\lambda$ of $F(z)$, the multiplicity of $\lambda$ coincides with  the degeneracy of $F(z)$ evaluated at $\lambda$, i.e., the nullity of the matrix $F(\lambda)$.
\end{definition}

The reason why a matrix polynomial as in the above definition is called ``simple'' can be seen from its Smith form.
Recall that for a $p\times p$ regular matrix polynomial $F(z)$ of degree $n$, there exist two  $p\times p$ unimodular matrix polynomials $E_L(z)$ and $E_R(z)$ such that $F(z)$ reduces to the following {\it Smith form}:
$$
E_L(z)F(z)E_R(z)={\rm diag}[f_1(z), f_2(z) , \cdots,  f_{p}(z)],
$$
where $f_1(z),\ldots,f_{p}(z)$ are monic scalar polynomials uniquely determined by $F(z)$ and, for $j=1,\ldots,p-1$, $f_{j+1}(z)$ is divisible by $f_j(z)$.
The factors $f_j(z)$ are called {\it invariant polynomials} of $F(z)$. Moreover, if $F(z)$ has $r$ distinct zeros $\lambda_1,\ldots,\lambda_r$, write each invariant polynomial $f_j(z)$ as
$$
f_j(z)=(z-\lambda_1)^{l_{j,1}}(z-\lambda_2)^{l_{j,2}}\cdots(z-\lambda_r)^{l_{j,r}} 
$$
Then the factors $(z-\lambda_k)^{l_{j,k}}$ are called the {\it elementary divisors }of $F(z)$.

\begin{proposition}\label{ProSimple} \cite[Corollary 1, P. 46]{Lan}
A $p\times p$ matrix polynomial is simple if and only if all its elementary divisors are linear in $\mathbb C$, or equivalently, all zeros of its invariant polynomials are simple.
\end{proposition}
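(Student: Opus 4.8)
The plan is to translate both sides of the claimed equivalence into statements about the exponents appearing in the Smith form, where they reduce to a transparent counting inequality. I would write the Smith form $E_L(z)F(z)E_R(z)=\mathrm{diag}[f_1(z),\ldots,f_p(z)]$ and, for each distinct zero $\lambda_k$ of $F(z)$, record the exponents $l_{j,k}$ through $f_j(z)=\prod_{k}(z-\lambda_k)^{l_{j,k}}$. The equivalence between the two right-hand conditions is immediate from the definitions: the elementary divisors are precisely the prime-power factors $(z-\lambda_k)^{l_{j,k}}$ of the invariant polynomials, so ``all elementary divisors are linear'' means every $l_{j,k}\in\{0,1\}$, which is exactly the assertion that every zero of every $f_j$ is simple. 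Thus the real content is the equivalence of these conditions with simplicity in the sense of Definition \ref{DefSimple}.

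Next I would express the two quantities in Definition \ref{DefSimple} in terms of the $l_{j,k}$. Since $E_L(z)$ and $E_R(z)$ are unimodular, their determinants are nonzero constants, whence $\det F(z)=c\prod_{j=1}^p f_j(z)$ for some $c\neq 0$; consequently the multiplicity of $\lambda_k$ as a zero of $F(z)$ equals $\sum_{j=1}^p l_{j,k}$. For the degeneracy, I would evaluate the Smith identity at $z=\lambda_k$: because $E_L(\lambda_k)$ and $E_R(\lambda_k)$ are invertible, $F(\lambda_k)$ and $\mathrm{diag}[f_1(\lambda_k),\ldots,f_p(\lambda_k)]$ share the same nullity, and the nullity of the diagonal matrix is the number of vanishing diagonal entries, i.e. $\#\{j:l_{j,k}\geq 1\}$. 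Hence the nullity of $F(\lambda_k)$ equals $\#\{j:l_{j,k}\geq 1\}$.

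With these two descriptions in hand, the proof closes by an elementary observation. For each $k$ one always has $\sum_{j=1}^p l_{j,k}\geq \#\{j:l_{j,k}\geq 1\}$, with equality precisely when every nonzero $l_{j,k}$ equals $1$. Therefore the simplicity condition---that for every zero $\lambda_k$ the multiplicity $\sum_j l_{j,k}$ equals the nullity $\#\{j:l_{j,k}\geq 1\}$---holds if and only if $l_{j,k}\leq 1$ for all $j,k$, which is exactly the linearity of all elementary divisors. I do not anticipate a genuine obstacle here; the only point requiring care is the invariance of the nullity under pre- and post-multiplication by the unimodular factors, which rests on the fact that a unimodular matrix polynomial is invertible at every point of $\mathbb C$ because its determinant is a nonzero constant.
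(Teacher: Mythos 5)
Your proof is correct. The paper itself gives no argument for this proposition, citing it directly from Lancaster \cite[Corollary 1, P. 46]{Lan}; your Smith-form computation --- identifying the multiplicity of $\lambda_k$ with $\sum_j l_{j,k}$ via $\det F(z)=c\prod_j f_j(z)$, the nullity of $F(\lambda_k)$ with $\#\{j: l_{j,k}\geq 1\}$ via invertibility of the unimodular factors at every point, and closing with the counting inequality --- is exactly the standard proof and fills in the citation correctly.
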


Certain properties of a simple matrix polynomial may also be found via derivatives of its adjoint matrix: 
\begin{proposition}\cite[Lemma 2.2]{Du} \label{ProSimpleAdj}
Let $F(z)$ be a $p\times p$ simple matrix polynomial and let $\lambda$ be a zero of $F(z)$ with multiplicity $l$. Then $({\rm adj}\ F)^{(k)}(\lambda)=0_p$ for $k=0,1,\ldots, l-2$ and $({\rm adj}\ F)^{(l-1)}(\lambda)\neq 0_p$. Moreover, $\rank ({\rm adj}\ F)^{(l-1)}(\lambda)=~l$.
\end{proposition}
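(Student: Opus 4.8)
The plan is to read everything off the Smith form of $F(z)$, which already appears in the excerpt. Write $E_L(z)F(z)E_R(z)=D(z):={\rm diag}[f_1(z),\ldots,f_p(z)]$ with $E_L,E_R$ unimodular and $f_1\mid f_2\mid\cdots\mid f_p$ the invariant polynomials, so that $\det F(z)=c\prod_j f_j(z)$ for a nonzero constant $c$. Since $F(z)$ is simple, Proposition \ref{ProSimple} guarantees that every $f_j$ has only simple roots; hence the order of $\lambda$ as a root of each $f_j$ is either $0$ or $1$. Writing $l$ for the multiplicity of $\lambda$, that is, the order of vanishing of $\det F(z)$ at $\lambda$, this means precisely $l$ of the $f_j$ vanish at $\lambda$ while the remaining $p-l$ do not.

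Next I would pass to adjugates. From the multiplicativity ${\rm adj}(XY)={\rm adj}(Y){\rm adj}(X)$, a polynomial identity valid where the factors are invertible and hence everywhere, applied to $D=E_L F E_R$, one obtains ${\rm adj}\,F=U\cdot{\rm adj}\,D\cdot V$, where $U:=(\det E_R)^{-1}E_R$ and $V:=(\det E_L)^{-1}E_L$ are themselves unimodular matrix polynomials; in particular $U(\lambda)$ and $V(\lambda)$ are invertible. The adjugate of the diagonal $D$ is again diagonal, with $j$-th entry $\prod_{i\neq j}f_i(z)$. By the previous paragraph this entry vanishes at $\lambda$ to order $l-1$ when $f_j(\lambda)=0$ and to order $l$ when $f_j(\lambda)\neq 0$. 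Thus the minimal order of vanishing of ${\rm adj}\,D$ at $\lambda$ is exactly $l-1$, and I would factor ${\rm adj}\,D(z)=(z-\lambda)^{l-1}\tilde D(z)$ with $\tilde D(z)$ a diagonal matrix polynomial whose value $\tilde D(\lambda)$ has a nonzero entry in exactly the $l$ positions where $f_j(\lambda)=0$; hence $\rank \tilde D(\lambda)=l$.

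Combining, ${\rm adj}\,F(z)=(z-\lambda)^{l-1}\,U(z)\tilde D(z)V(z)$. The prefactor $(z-\lambda)^{l-1}$ shows that the order of vanishing is at least $l-1$, so $({\rm adj}\,F)^{(k)}(\lambda)=0_p$ for $k=0,\ldots,l-2$, while Taylor expansion gives $({\rm adj}\,F)^{(l-1)}(\lambda)=(l-1)!\,U(\lambda)\tilde D(\lambda)V(\lambda)$. Since $U(\lambda)$ and $V(\lambda)$ are invertible, multiplication by them preserves rank, so $\rank({\rm adj}\,F)^{(l-1)}(\lambda)=\rank\tilde D(\lambda)=l$; in particular this matrix is nonzero, as required.

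I anticipate the main work to be the bookkeeping in the second paragraph: pinning down that the minimal order of vanishing of ${\rm adj}\,D$ at $\lambda$ is exactly $l-1$, and not larger, and that the associated leading coefficient $\tilde D(\lambda)$ has rank exactly $l$. Both hinge on simplicity forcing each $f_j$ to contribute order $0$ or $1$, which is where Proposition \ref{ProSimple} does the essential work. The remaining point requiring care is the invariance of the order of vanishing and of the leading-coefficient rank under left and right multiplication by the unimodular factors $U$ and $V$, which holds because these are invertible at $\lambda$.
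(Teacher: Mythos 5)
Your argument is correct and complete. Note that the paper does not actually prove Proposition \ref{ProSimpleAdj}; it is quoted verbatim from Duran \cite[Lemma 2.2]{Du}, so there is no in-paper proof to compare against. Your Smith-form derivation is the natural self-contained route and is essentially the standard one: from $E_LFE_R=D$ you correctly get $\operatorname{adj}F=U\cdot\operatorname{adj}D\cdot V$ with $U=(\det E_R)^{-1}E_R$ and $V=(\det E_L)^{-1}E_L$ unimodular (the reversal of factors in $\operatorname{adj}(XY)=\operatorname{adj}(Y)\operatorname{adj}(X)$ is handled properly), and the key counting step is right: simplicity via Proposition \ref{ProSimple} forces each invariant polynomial to vanish at $\lambda$ to order $0$ or $1$, so exactly $l$ of them vanish there, the $j$-th diagonal entry $\prod_{i\neq j}f_i$ of $\operatorname{adj}D$ vanishes to order $l-1$ or $l$ according as $f_j(\lambda)=0$ or not, and $\tilde D(\lambda)$ has rank exactly $l$. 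The only implicit facts you rely on are that ``multiplicity'' means the order of $\lambda$ as a root of $\det F$ (the convention of \cite{Lan}, consistent with Definition \ref{DefSimple}) and that left and right multiplication by the invertible matrices $U(\lambda)$, $V(\lambda)$ preserves rank; both are unobjectionable. No gaps.
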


Given a $p\times p$ matrix polynomial~$F(z)$ written as
in~\eqref{ReprentationMatrixPolynomials}, define a matrix polynomial $F^{\vee}(z)$ by
\begin{equation*}
F^{\vee}(z)\coloneqq\sum_{k=0}^n A^*_k z^{n-k}.
\end{equation*}

\begin{definition}
Let $\Omega$ be a $\mathbb C^{p\times p}$-valued positive semi-definite Borel matrix measure on $\mathbb R$. A sequence of matrix polynomials $(P_k(z))_{k=0}^m$ is called a sequence of {\it monic right orthonormal (or normalized orthogonal) matrix polynomials with respect to} $\Omega$ if deg $P_k(z)=k$ and 
$$
\int_{\mathbb R} P_k^{\vee}(u) {\rm d}\Omega (u) P_j(u)=\delta_{jk}I_p, \quad j,k=0,\ldots,m,
$$
where $\delta_{jk}$ stands for the Kronecker symbol.
\end{definition}

The application of orthogonal matrix polynomials can be found in  telecommuniation  \cite{GrIg}, information theory \cite{Fuhrmann}, matricial interpolation and moment problems \cite{HZC}, etc.

Let $\Omega$ be a $\mathbb C^{p\times p}$-valued positive semi-definite Borel matrix measure on $\mathbb R$.  
Suppose that $(P_k(z))_{k=0}^m$ is a sequence of monic $p\times p$ matrix polynomials with respect to $\Omega$. The orthogonality for $(P_k(z))_{k=0}^m$ with respect to $\Omega$ can be determined by  
the  moment sequence $({\bf s}_k)_{k=0}^{2m}$ of $\Omega$ as
\begin{equation}\label{Moments}
\int_{\mathbb R} u^k {\rm d} \Omega(u)={\bf s}_k,\quad k=0, \ldots, 2m.
\end{equation}

In fact, \eqref{Moments} guarantees that for $j,k=0,\ldots,m$ and $j\leq k$,
\begin{equation*}
\int_{\mathbb R} P_k^{\vee}(u) {\rm d}\Omega (u) P_j(u)
                                                       =[A^*_{kk}, \ldots,
                                                      A^*_{k0}] \begin{bmatrix}
            {\bf s}_{0} & \cdots & {\bf s}_{k}\\
            \vdots & \ddots  &  \vdots \\
            {\bf s}_{k} &  \cdots &  {\bf s}_{2k}
        \end{bmatrix} \begin{bmatrix} A_{jj} \\ \vdots\\
                                                       A_{j0}\\
                                                       \vdots\\
                                                       0_p \end{bmatrix},
\end{equation*}
where $ P_k(z):=\sum_{j=0}^k A_{k,k-j}z^j$, $A_{k0}=I_p$. The above relation indicates that
\begin{proposition}\label{ProOMPMoments}
Let $\Omega$ be a $\mathbb C^{p\times p}$-valued positive semi-definite Borel matrix measure on $\mathbb R$. Assume that $({\bf s}_j)_{j=0}^{2m}$ is the associated moment sequence 
of $\Omega$ as in \eqref{Moments}.
 $(P_k(z))_{k=0}^m$ is a sequence of monic right orthonormal matrix polynomials with respect to $\Omega$ if and only if the following equations hold for the coefficients of $P_k(z)$ for $k=0,\ldots,m$:
$$
 [A^*_{kk}, \ldots,
                                                      A^*_{k0}]\begin{bmatrix}
            {\bf s}_{0} & \cdots & {\bf s}_{k}\\
            \vdots &  \ddots  &  \vdots \\
            {\bf s}_{k} &  \cdots &  {\bf s}_{2k}
        \end{bmatrix} \begin{bmatrix} A_{kk} \\ 
                                                       \vdots\\
                                                       A_{k0} \end{bmatrix}=I_p,\quad
                                                       [A^*_{kk},\ldots,
                                                      A^*_{k0}] \begin{bmatrix}
            {\bf s}_{0} & \cdots & {\bf s}_{k-1}\\
            \vdots &  \ddots  &  \vdots \\
            {\bf s}_{k} &  \cdots &  {\bf s}_{2k-1}
        \end{bmatrix}=
0_{p\times kp},
$$ where $P_k(z):=\sum_{j=0}^k A_{k,k-j}z^j$.
\end{proposition}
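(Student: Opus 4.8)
The plan is to recognize the two displayed systems of equations as precisely the diagonal and the off-diagonal parts of the full orthonormality relations $\int_{\mathbb R}P_k^\vee(u)\,{\rm d}\Omega(u)\,P_j(u)=\delta_{jk}I_p$ for $j,k=0,\dots,m$, and to argue that it is enough to track the indices $j\le k$. The starting point is the Gram-type identity recorded in the computation just before the statement: by \eqref{Moments}, for $j\le k$,
\[
\int_{\mathbb R}P_k^\vee(u)\,{\rm d}\Omega(u)\,P_j(u)=\sum_{a=0}^{k}\sum_{b=0}^{j}A_{k,k-a}^{*}\,{\bf s}_{a+b}\,A_{j,j-b},
\]
so every claim is reduced to algebraic identities among the coefficient matrices and the moments ${\bf s}_j$. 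To discard the range $j>k$, I would first record the Hermitian symmetry of this pairing: since $\Omega$ is positive semi-definite, hence self-adjoint, and its support lies in $\mathbb R$, one has $P_j(u)^{*}=P_j^\vee(u)$ and $(P_k^\vee(u))^{*}=P_k(u)$ for real $u$, whence $\bigl(\int P_k^\vee\,{\rm d}\Omega\,P_j\bigr)^{*}=\int P_j^\vee\,{\rm d}\Omega\,P_k$. Because $\delta_{jk}I_p$ is self-adjoint, each relation with $j>k$ equals the adjoint of $\int P_j^\vee\,{\rm d}\Omega\,P_k$, whose inner index $k$ is smaller than its outer index $j$ and hence is of the type treated below; so it suffices to characterize the cases $j\le k$.

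Next I would split these into the diagonal case $j=k$ and the strictly lower case $j<k$. The diagonal case is immediate: the Gram identity with $j=k$ is literally the first displayed equation, so $\int P_k^\vee\,{\rm d}\Omega\,P_k=I_p$ holds if and only if that equation holds. For $j<k$, put $w_c:=\sum_{a=0}^{k}A_{k,k-a}^{*}\,{\bf s}_{a+c}$ for $c=0,\dots,k-1$; these are exactly the blocks of the product $[A_{kk}^{*},\dots,A_{k0}^{*}]\,[{\bf s}_{a+c}]$ appearing in the second displayed equation, so that equation asserts $w_0=\dots=w_{k-1}=0$. On the other hand the Gram identity gives $\int P_k^\vee\,{\rm d}\Omega\,P_j=\sum_{b=0}^{j}w_b\,A_{j,j-b}$ for each $j<k$; thus the block row $\bigl(\int P_k^\vee\,{\rm d}\Omega\,P_0,\dots,\int P_k^\vee\,{\rm d}\Omega\,P_{k-1}\bigr)$ is $(w_0,\dots,w_{k-1})$ right-multiplied by the block-triangular matrix with $(b,j)$ block $A_{j,j-b}$ for $b\le j$ and $0_p$ otherwise. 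Since each $P_j$ is monic, the diagonal blocks of this matrix are $A_{j,0}=I_p$, so it is invertible; hence the off-diagonal integrals all vanish precisely when all $w_c$ vanish, i.e.\ precisely when the second equation holds. Combining the two cases with the symmetry reduction delivers the stated equivalence.

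The step I expect to be the crux is the ``only if'' half of the off-diagonal equivalence: recovering the individual moment relations $w_c=0$ from the integrated orthogonality $\int P_k^\vee\,{\rm d}\Omega\,P_j=0$. This is exactly where the monic, unit-diagonal triangular structure of the $P_j$ is indispensable, and it is equivalent to expressing each monomial $u^{c}I_p$ with $c<k$ as a right matrix-combination $\sum_{j\le c}P_j(u)\,D_{c,j}$ of the lower-degree polynomials. The main point to handle with care throughout is the noncommutativity: in the Gram identity the coefficient matrices occupy prescribed sides of ${\bf s}_{a+b}$, so both the change of basis and the triangular factor above must be written with all matrix factors on the correct side.
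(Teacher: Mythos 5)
Your proof is correct and takes essentially the same approach as the paper, whose entire argument consists of displaying the Gram-type identity $\int_{\mathbb R}P_k^{\vee}(u)\,{\rm d}\Omega(u)\,P_j(u)=[A^*_{kk},\ldots,A^*_{k0}]\,[{\bf s}_{a+b}]\,[A_{jj},\ldots,A_{j0},\ldots,0_p]^{\rm T}$ for $j\le k$ and asserting that it ``indicates'' the proposition. The details you add beyond the paper --- the reduction to $j\le k$ via Hermitian symmetry of the pairing, and the unit-diagonal block-triangular inversion recovering $w_0=\cdots=w_{k-1}=0$ from the vanishing of the off-diagonal integrals --- are exactly the right way to fill in that gap and are sound.
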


 \cite[Theorem 2.3]{Du} points it out that these orthonormal matrix polynomials are special types of simple matrix polynomials:

\begin{proposition}\label{nullityzeroOMP} 
Let $\Omega$ be a $\mathbb C^{p\times p}$-valued positive semi-definite Borel matrix measure on $\mathbb R$. Suppose that $(P_k(z))_{k=0}^m$ is a sequence of monic right orthonormal matrix polynomials with respect to $\Omega$. Then each $P_k(z)$ is simple and  $\sigma(P_k)\subseteq \mathbb R$.  
\end{proposition}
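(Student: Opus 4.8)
The plan is to realize the zeros of $P_m$ as the spectrum of a self-adjoint operator on a finite-dimensional inner-product space; both simplicity and reality will then drop out of the spectral theorem. It suffices to treat $k=m$, since $(P_j)_{j=0}^{k}$ is again a sequence of monic right orthonormal matrix polynomials and the argument applies verbatim with $m$ replaced by $k$. First I would equip the $mp$-dimensional complex vector space $V$ of $\C^p$-valued polynomials of degree $<m$ with the pairing $\langle f,g\rangle\coloneqq\int_{\R} f(u)^*\,{\rm d}\Omega(u)\,g(u)$. Since $\Omega\succeq 0$ this form is nonnegative, and its Gram matrix in the monomial basis is $[{\bf s}_{i+j}]_{i,j=0}^{m-1}$. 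Expanding instead in the basis $P_0,\dots,P_{m-1}$ and using the orthonormality relations of Proposition \ref{ProOMPMoments}, $\langle g,g\rangle$ collapses to $\sum_j d_j^* d_j$ for the coordinate vectors $d_j\in\C^p$ of $g$; hence $\langle\cdot,\cdot\rangle$ is positive definite on $V$. (I would note that for real $u$ one has $F^\vee(u)=F(u)^*$, which is exactly what turns the module pairing into a genuine scalar inner product.)

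Next I would introduce $T\colon V\to V$, multiplication by $z$ reduced modulo $P_m$: for $g\in V$ with leading coefficient $v$, put $Tg\coloneqq z\,g(z)-P_m(z)v$. The key point is self-adjointness of $T$, and this is precisely where the orthogonality of $P_m$ to $V$ enters: since $\int P_m(u)^*\,{\rm d}\Omega(u)\,f(u)=0$ for every $f\in V$, one gets $\langle f,P_m v\rangle=0$, and therefore $\langle f,Tg\rangle=\langle f,zg\rangle=\langle zf,g\rangle=\langle Tf,g\rangle$, the reality of $u$ being used once more to move the factor $u$ across. As a self-adjoint operator on a finite-dimensional space, $T$ has real spectrum and is diagonalizable.

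It remains to match the spectral data of $T$ with the zeros of $P_m$. If $P_m(\lambda)v=0$ with $v\neq 0$, then $g_v(z)\coloneqq P_m(z)v/(z-\lambda)$ lies in $V$, has leading coefficient $v$, and satisfies $Tg_v=\lambda g_v$; conversely $Tg=\lambda g$ forces $(z-\lambda)g(z)=P_m(z)v$ with $v$ the leading coefficient of $g$, whence $P_m(\lambda)v=0$. Thus $v\mapsto g_v$ is a linear isomorphism $\ker P_m(\lambda)\to\ker(T-\lambda I)$, so $\sigma(P_m)=\sigma(T)\subseteq\R$, and $\dim\ker P_m(\lambda)$ equals the algebraic multiplicity of $\lambda$ as an eigenvalue of $T$ because $T$ is diagonalizable. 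Summing over $\lambda\in\sigma(T)$ gives $\sum_\lambda\dim\ker P_m(\lambda)=\dim V=mp=\deg\det P_m$; combined with the elementary Smith-form bound $\dim\ker P_m(\lambda)\le\operatorname{mult}_\lambda(\det P_m)$, this forces equality term by term, which by Definition \ref{DefSimple} says that $P_m$ is simple.

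I expect the last step to be the crux: passing from ``geometric equals algebraic multiplicity for $T$'' to ``nullity equals zero-multiplicity for $P_m$''. At bottom one must know that $T$ is a linearization of $P_m$, i.e.\ $\det(zI-T)=\det P_m(z)$. I would either quote the standard block-companion characteristic-polynomial identity for monic matrix polynomials, or sidestep it via the dimension count just given together with the bound $\dim\ker F(\lambda)\le\operatorname{mult}_\lambda(\det F)$, which is immediate from the Smith form recalled before Proposition \ref{ProSimple}.
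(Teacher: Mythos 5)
Your proof is correct, but note that the paper does not actually prove Proposition \ref{nullityzeroOMP} at all: it is quoted from \cite[Theorem 2.3]{Du} (Duran) without argument. So there is no in-paper proof to match; what you have supplied is a self-contained derivation, and it is essentially the standard one underlying Duran's result, phrased abstractly. Your operator $T$ (multiplication by $z$ reduced modulo $P_m$) is, in the orthonormal basis $P_0,\dots,P_{m-1}$, exactly the truncated block Jacobi matrix that Duran works with; self-adjointness of $T$ with respect to $\langle f,g\rangle=\int f^*\,{\rm d}\Omega\,g$ encodes the three-term recurrence having self-adjoint diagonal blocks. The two points where a blind attempt could have gone wrong are both handled: (i) you use $F^\vee(u)=F(u)^*$ for real $u$ to turn the right-module pairing of Proposition \ref{ProOMPMoments} into a genuine positive definite scalar inner product on the $mp$-dimensional space $V$ (positive definiteness is what makes ``self-adjoint $\Rightarrow$ diagonalizable with real spectrum'' legitimate); and (ii) rather than invoking the linearization identity $\det(zI-T)=\det P_m(z)$, you close the argument by the dimension count $\sum_\lambda\dim\Ker P_m(\lambda)=mp=\deg\det P_m$ against the Smith-form bound $\dim\Ker P_m(\lambda)\le\operatorname{mult}_\lambda(\det P_m)$, which forces termwise equality and hence simplicity in the sense of Definition \ref{DefSimple}. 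The eigenvector correspondence $v\mapsto P_m(z)v/(z-\lambda)$ is the right isomorphism $\Ker P_m(\lambda)\to\Ker(T-\lambda I)$, and the reduction to $k=m$ is harmless since truncated sequences remain orthonormal. What your route buys over the citation is a complete, elementary proof using only the spectral theorem and the Smith form already recalled in the paper; what it costs is nothing beyond the page it occupies.
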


\begin{proof}[Proof of Theorem \ref{ProHankelR}]
Suppose that $R(z)$ admits the Laurent expansion
\eqref{RationalLaurent} and $P(z):=\sum_{k=0}^{m} A_k z^{m-k}$ ($A_0=I_p$).  Let $(\wtilde {\bf s}_j)_{j=0}^{2m}$ be a $p\times p$ matrix sequence  given by
$$
\wtilde {\bf s}_j:=\begin{cases} -{\bf s}_j, & j=0, \ldots, 2m-1,\\
I_p+\sum_{j=0}^{m-1}A^*_{m-j} {\bf s}_{m+j}, & j=2m. 
\end{cases}
$$

Comparing  the matrix fraction  \eqref{RQP} and the Laurent expression  \eqref{RationalLaurent} of $R(z)$, one can find the equations
\begin{equation}\label{MFLE}
\begin{bmatrix}
        {\bf s}_j   & \cdots & {\bf s}_{j+m}  \\
        \vdots   & \ddots & \vdots\\
        {\bf s}_{j+k}  & \cdots& {\bf s}_{j+k+m}
    \end{bmatrix}
    \begin{bmatrix}
           A_{m} \\
         \vdots\\
         A_{0}\\
        \end{bmatrix}=0_{jp\times p},\quad j,k=0,1,\ldots,
\end{equation}
where $P(z):=\sum_{j=0}^m A_k z^{k-j}$.

From \eqref{MFLE} it is not difficult to calculate that
\begin{equation}\label{WHW}
W^*[\wtilde {\bf s}_{j+k}]_{j,k=0}^mW=\begin{bmatrix}
-{H}_{m-1}(R) & \\
& I_p
\end{bmatrix},
\end{equation}
where $W:=\begin{bmatrix}
I_p & & & A_m\\
   & \ddots &  & \vdots\\
   &  & I_p & A_1\\
   &  &    &  I_p
   \end{bmatrix}
$.
Since  ${\mathscr H}_{m-1}(R)$ is negative definite, the above equation means that $[\wtilde {\bf s}_{j+k}]_{j,k=0}^m$ is positive definite. In view of the solvability of truncated matricial Hamburger moment problems (see e.g. \cite{Kov,FKM,HZC}),  there exists at least a $p\times p$ positive semi-definite Borel matrix measure $\Omega$ on $\mathbb R$ such~that 
$$
\int_{\mathbb R} u^j {\rm d} \Omega(u)=\wtilde {\bf s}_{j},\quad j=0, \ldots, 2m.
$$
Suppose that $(P_k(z))_{k=0}^{m}$ is a sequence of orthonormal matrix polynomials  with respect to $\Omega$. 
A combination of \eqref{MFLE}, \eqref{WHW}  and Proposition \ref{ProOMPMoments} shows that $P(z)$ coincides with $P^{\vee}_{m}(z)$.
 Due to Proposition \ref{nullityzeroOMP}, $P(z)$ is simple and $\sigma(P)\subseteq \mathbb R$. The latter implies that $R(z)$ is holomorphic in~$\mathbb C_+$.

An application of  Lemma \ref{LemBezoutHankel}  yields that
\begin{equation*}
P(z)^{*} \frac{R(z)-R(z)^*}{z-\bar z} P(z)
=- \begin{bmatrix}I_p & \bar z I_p & \cdots & \bar z^{m-1}I_p \end{bmatrix}
    {\mathbf B}_{P^{\vee}, Q^{\vee}}(P, Q)
    \begin{bmatrix}I_p  \\ \vdots \\ z^{m-1}I_p \end{bmatrix}\succeq 0, \quad  z\in \mathbb C_+.
    \end{equation*} 
That is to say, $R(z)$ is a matrix-valued Herglotz-Nevanlinna function and, due to  Theorem \ref{ProRHankel}, $|\deg Q-\deg P|\leq 1$.

\end{proof}

As a consequence of Theorems \ref{ProRHankel} and \ref{ProHankelR}, we have
\begin{corollary}\label{CorMassE_j}
 Let $R(z)$ be a $p\times p$ self-adjoint rational matrix  function with the matrix fraction \eqref{RQP}. Suppose that $Q(z)$ and $P(z)$ are right coprime and $R(z)$ is  a matrix-valued Herglotz-Nevanlinna function with the form \eqref{RRepresent}. Then $P(z)$ is simple, $\sigma(P)\subseteq \mathbb R$ and
 \begin{equation}\label{EqEj}
E_j=-l_j Q(\lambda_j) \frac{({\rm adj}\ P)^{(l_j-1)}(\lambda_j)}{(\det P)^{(l_j)}(\lambda_j)}, \quad j=1,\ldots,r,
\end{equation}
where $\lambda_1, \ldots, \lambda_r$ are all the zeros of $P(z)$ with multiplicities $l_1,\ldots,l_r$, respectively.
Moreover, for $j=1,\ldots,r$, $E_j\neq 0_p$ and $\rank E_j=l_j$ whenever $P(z)$ and $Q(z)$ have no common zeros.
\end{corollary}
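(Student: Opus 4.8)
The plan is to prove the three assertions in turn: the structural facts ($P$ simple and $\sigma(P)\subseteq\mathbb R$), the residue formula \eqref{EqEj}, and the rank statement. The structural facts I would deduce exactly as the corollary advertises, by feeding $R$ into Theorems \ref{ProRHankel} and \ref{ProHankelR}. Since $R$ is a self-adjoint Herglotz-Nevanlinna function, Theorem \ref{ProRHankel} immediately gives ${\mathscr H}_{-2,0}(R)\succeq 0$ and $|\deg Q-\deg P|\leq 1$, and it sharpens ${\mathscr H}_{\deg P-1}(R)\preceq 0$ to ${\mathscr H}_{\deg P-1}(R)\prec 0$ provided that, besides right coprimeness, $Q(z)$ is regular. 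With the strict inequality in hand, the hypotheses of Theorem \ref{ProHankelR} are met, and part (iii) delivers simultaneously that $P(z)$ is simple and $\sigma(P)\subseteq\mathbb R$; the final clause of Proposition \ref{ProRRepresent} then identifies the nodes $\{\lambda_j\}_{j=1}^r$ in \eqref{RRepresent} with the whole of $\sigma(P)$.

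The one ingredient not handed to us for free is the regularity of $Q(z)$, and I expect this to be the main obstacle. First I would rule out $\det R\equiv 0$. Using the identity ${\rm Im}\,R(z)={\rm Im}(z)\bigl(A+\sum_{j}E_j/|\lambda_j-z|^2\bigr)$ from the proof of Proposition \ref{ProRRepresent}, every right null vector of $R(z)$ with $z\in\mathbb C_+$ must annihilate the nonnegative matrix $A+\sum_j E_j/|\lambda_j-z|^2$, hence lies in the fixed subspace $N:=\Ker A\cap\bigcap_j\Ker E_j$, on which $R(z)\equiv B$. Thus $\det R\equiv 0$ would force a constant vector $v\neq 0$ with $R(z)v\equiv 0$. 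Setting $w(z):=({\rm adj}\,P)(z)\,v$, a nonzero polynomial vector of degree at most $(p-1)\deg P$, one gets $Q(z)w(z)\equiv 0$ and $P(z)w(z)=\det P(z)\,v$. A nonvanishing $w(\lambda)$ at a zero $\lambda$ of $\det P$ would be a common null vector of $P(\lambda)$ and $Q(\lambda)$, contradicting right coprimeness; but $w$ cannot vanish to order $\geq l_\lambda$ at every zero, since otherwise $\det P$ would divide $w$ componentwise, which is impossible as $\deg w\leq(p-1)\deg P<p\deg P=\deg\det P$. Dividing out the exact vanishing order at an offending zero produces the forbidden common null vector, so $\det R\not\equiv 0$ and $Q(z)$ is regular.

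For the residue formula I would expand $R(z)=Q(z)\,({\rm adj}\,P)(z)/\det P(z)$ about a zero $\lambda_j$ of multiplicity $l_j$. Because $P(z)$ is simple, Proposition \ref{ProSimpleAdj} gives $({\rm adj}\,P)^{(k)}(\lambda_j)=0$ for $k<l_j-1$, so the leading Laurent terms are $\det P(z)=\tfrac{1}{l_j!}(\det P)^{(l_j)}(\lambda_j)(z-\lambda_j)^{l_j}+\cdots$ and $({\rm adj}\,P)(z)=\tfrac{1}{(l_j-1)!}({\rm adj}\,P)^{(l_j-1)}(\lambda_j)(z-\lambda_j)^{l_j-1}+\cdots$, with the latter leading coefficient nonzero by Proposition \ref{ProSimpleAdj}. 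Hence $\lambda_j$ is a simple pole of $R$ with residue $l_j\,Q(\lambda_j)({\rm adj}\,P)^{(l_j-1)}(\lambda_j)/(\det P)^{(l_j)}(\lambda_j)$, and matching the coefficient of $(z-\lambda_j)^{-1}$ against the term $E_j/(\lambda_j-z)$ of \eqref{RRepresent} yields \eqref{EqEj} after a sign change.

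Finally, for the rank assertion, note that $P$ and $Q$ having no common zeros means $\det Q(\lambda_j)\neq 0$, i.e.\ $Q(\lambda_j)$ is invertible, for every $\lambda_j\in\sigma(P)$. Left multiplication by an invertible matrix preserves rank, so \eqref{EqEj} gives $\rank E_j=\rank({\rm adj}\,P)^{(l_j-1)}(\lambda_j)$, which equals $l_j$ by Proposition \ref{ProSimpleAdj}; in particular $E_j\neq 0_p$.
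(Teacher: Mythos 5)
Your proof is correct and follows the same overall route as the paper: chain Theorem \ref{ProRHankel} into Theorem \ref{ProHankelR} to obtain that $P(z)$ is simple with $\sigma(P)\subseteq\mathbb R$, read off \eqref{EqEj} as the residue of $Q(z)({\rm adj}\,P)(z)/\det P(z)$ at what Proposition \ref{ProSimpleAdj} shows to be a simple pole, and get the rank statement from the invertibility of $Q(\lambda_j)$ together with $\rank({\rm adj}\,P)^{(l_j-1)}(\lambda_j)=l_j$. The one place you genuinely add something is also the most valuable: the paper calls the simplicity of $P(z)$ an ``immediate consequence'' of Theorems \ref{ProRHankel} and \ref{ProHankelR}, but the strict inequality ${\mathscr H}_{\deg P-1}(R)\prec 0$ needed to invoke Theorem \ref{ProHankelR} is asserted in Theorem \ref{ProRHankel} only under the extra hypothesis that $Q(z)$ is regular, which the corollary does not assume. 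Your argument that right coprimeness plus the Herglotz--Nevanlinna property forces $\det Q\not\equiv 0$ --- a persistent null vector $v$ of $R(z)$ must lie in $\Ker A\cap\Ker B\cap\bigcap_j\Ker E_j$, and then $w(z)=({\rm adj}\,P)(z)v$, after dividing out its exact vanishing order at a suitable zero of $\det P$ (which exists because $\deg w\le(p-1)\deg P<\deg\det P$), yields a common right null vector of $P(\lambda)$ and $Q(\lambda)$, contradicting coprimeness via the rank test of Kailath's Lemma 6.3-6 --- closes this gap cleanly. Your explicit Laurent expansion for \eqref{EqEj} simply spells out the computation the paper outsources to Lancaster's Subsections 4.3 and 4.5, so apart from the regularity lemma the two proofs coincide.
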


\begin{proof} The statement that $P(z)$ is simple and $\sigma(P)\subseteq \mathbb R$ is an immediate consequence of Theorems \ref{ProRHankel} and \ref{ProHankelR}.
In view of \cite[Subsections 4.3 and 4.5]{Lan}, the formula \eqref{EqEj} holds. 
 If in addition $P(z)$ and $Q(z)$ have no common zeros,
 \begin{equation*}
\rank E_j
=\rank ({\rm adj}\ P)^{(l_j-1)}(\lambda_j)=l_j, \quad j=1,\ldots,r,
\end{equation*}
where the last equation is due to  Proposition \ref{ProSimpleAdj} again.
\end{proof}

Coupled with Corollary \ref{CorMassE_j}, Proposition \ref{ProRRepresent} now fully extends the Chebotarev theorem to the matrix case. For the representation of $E_j$  in terms of the left and right latent vectors of $P(z)$ as in  Corollary \ref{CorMassE_j}, we refer the reader to \cite[Subsection 4.3]{Lan}.

It should be pointed out that for two $p\times p$ matrix polynomials $Q(z)$ and $P(z)$, the condition that they have no common zeros can imply their right coprimeness, while the converse implication does not generally hold (e.g. $
P(z)=\begin{bmatrix}
1 & 0\\
0 & z
\end{bmatrix}$ and  $Q(z)=\begin{bmatrix}
1 & 1\\
0 & z
\end{bmatrix}
$). 
Therefore, under the assumption of Theorem  \ref{ProHankelR}, $Q(z)$ and $P(z)$ are right coprime, while they may have common zeros:
\begin{example}
Given two $2\times 2$ matrix polynomials
$$
P(z):=\begin{bmatrix}
z & 1\\
1 & z
\end{bmatrix},\quad Q(z):=\begin{bmatrix}
4z-2 & -z+4\\
-z-1 & -z-2
\end{bmatrix},
$$
the $2\times 2$ rational matrix function $R(z):=Q(z)(P(z))^{-1}$ can be represented as
$$
R(z)
=\begin{bmatrix}
4 & -1\\
-1 & -1
\end{bmatrix}+ \frac{\begin{bmatrix} \frac12 & -\frac12\\
-\frac12 &\frac12\end{bmatrix}}{1-z}+\frac{\begin{bmatrix} \frac12 & -\frac12\\
-\frac12 &\frac12\end{bmatrix}}{-1-z}
=\begin{bmatrix}
4 & -1\\
-1 & -1
\end{bmatrix}+ \frac{\begin{bmatrix} -1 & 0\\
0 &-1 \end{bmatrix}}{z}+\cdots.
$$
It is readily checked that ${\mathscr H}_{-2,0}(R)=0_p$, ${\mathscr H}_{0}(R)=\begin{bmatrix} -1 & 0\\
0 & -1 \end{bmatrix}\prec 0$, $|\deg Q-\deg P|=0$ and $Q(z)$ and $P(z)$ are right coprime.
However, $1$ is a common zero of
 $Q(z)$ and $P(z)$.

\end{example}

\section{Main results}\label{SectionStability}
To extend the stability criterion via Herglotz-Nevanlinna functions,  we invoke  the stability criteria in terms of matricial Markov parameters.
An application of the extended Grommer theorem will be found to bridge these recently studied criteria and our extension.

\begin{definition}\label{DefEvenOddPart}
    A $p\times p$ matrix polynomial~$F(z)$ may be split into the
    \emph{even part}~$F_{e}(z)$ and the \emph{odd part}~$F_{o}(z)$ so that
    \( 
        F(z)=F_{e}(z^2)+zF_{o}(z^2).
    \)
    For~$F(z)$  written as in~\eqref{ReprentationMatrixPolynomials},
    they are defined by
    \begin{equation*}
  F_{e}(z)\coloneqq \sum_{k=0}^m A_{2k}z^{m-k}
  \quad\text{and}\quad
  F_{o}(z)\coloneqq \sum_{k=1}^m A_{2k-1}z^{m-k}
\end{equation*}
when~$\deg F=2m$, and by
\begin{equation*}
    F_{e}(z)\coloneqq \sum_{k=0}^m A_{2k+1}z^{m-k}
    \quad\text{and}\quad
    F_{o}(z)\coloneqq \sum_{k=0}^m A_{2k}z^{m-k}
\end{equation*}
when~$\deg F=2m+1$.
\end{definition}

\begin{remark}
For a monic $p\times p$ matrix polynomial of even (resp. odd) degree, its even (resp. odd)  part   is monic as well.
\end{remark}

Given a monic $p\times p$ matrix polynomial $F(z)$ with  the even part $F_e(z)$  and the
odd part $F_o(z)$, we associate with a pair of $p\times p$  rational matrix functions   
\begin{equation}\label{R_F}
R_F(z)\coloneqq 
         -\dfrac{F_{o}(z)}{F_{e}(z)}, \quad R_{z,F}(z)\coloneqq -zR_F(z)=  \dfrac{zF_{o}(z)}{F_{e}(z)}, \quad z\in \mathbb C\setminus \sigma(F_e)
\end{equation}         
when $F_e(z)$ is regular and another pair of $p\times p$  rational matrix functions        
\begin{align}             
 &\wtilde R_F(z)\coloneqq  \dfrac{F_{e}(z)}{F_{o}(z)}, \quad z\in \mathbb C\setminus \sigma(F_o), \label{tildeR_F} \\  
&\wtilde R_{z,F}(z)\coloneqq -z^{-1}\wtilde R_F(z)= -\dfrac{F_{e}(z)}{zF_{o}(z)},  \quad z\not\in \sigma(F_o)\cup\{0\}. \label{tildeRzF}
\end{align}
when $F_o(z)$ is regular. Obviously when both $F_e(z)$ and $F_o(z)$ are regular, 
$$
R_F(z)=-(\wtilde R_F(z))^{-1},\quad z\not\in \sigma(F_e)\cup \sigma(F_o),
$$  
and 
$$
R_{z,F}(z)=-(\wtilde R_{z,F}(z))^{-1},\quad z\not\in \sigma(F_e)\cup \sigma(F_o)\cup \{0\}.
$$
The aforementioned rational matrix functions play a role in testing the stability of $F(z)$:
 
\begin{theorem}\label{ThmStabilityMarkov} \cite[Theorem 4.4]{XD} Let $F(z)$ be a monic $p\times p$ matrix polynomial with  the even part $F_e(z)$  and the
odd part $F_o(z)$.
\begin{enumerate}[(a)]
\item For $\deg F=2m$, suppose that  $R_F(z)$ and $R_{z,F}(z)$ are given as \eqref{R_F}. Assume that $R_F(z)$ is self-adjoint. The Hurwitz stability for $F(z)$ is equivalent to the negative definiteness of $\mathscr H_{m-1}(R_F)$ and $\mathscr H_{m-1}(R_{z,F})$.
\item For $\deg F=2m+1$, suppose that  $\wtilde R_F(z)$ and $\wtilde R_{z,F}(z)$ are given as \eqref{tildeR_F}--\eqref{tildeRzF}. Assume that $\wtilde R_F(z)$ is self-adjoint. The Hurwitz stability for $F(z)$ is equivalent to  the negative definiteness of $\mathscr H_{m-1}(\wtilde R_{F})$ and $\mathscr H_{m}(\wtilde R_{z,F})$.
\end{enumerate}
\end{theorem}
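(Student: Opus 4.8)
The plan is to route the proof through a Hermite-type Bezoutian: first characterize Hurwitz stability of $F(z)$ by the sign-definiteness of a single $np\times np$ Hermite-Fujiwara Bezoutian, and then split that matrix along the even/odd decomposition into the two Hankel matrices of the statement. I would prove the even case $\deg F = 2m$ in full and deduce the odd case $\deg F = 2m+1$ from the reciprocal identities $R_F = -(\wtilde R_F)^{-1}$ and $R_{z,F} = -(\wtilde R_{z,F})^{-1}$, which interchange the roles of $F_e$ and $F_o$.

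For $\deg F = 2m$ the first step is a Hermite-Fujiwara reduction: $F(z)$ is Hurwitz stable if and only if a Hermite-Bezoutian $H(F)$, assembled from $F(z)$ and $F^{\vee}(-z)$, is sign-definite; equivalently, in the variable $w=z^2$ the pair $(F_e(w),F_o(w))$ is a \emph{positive pair}, meaning the zeros of the invariant polynomials of $F_e$ and $F_o$ are simple, lie on the negative real axis, and interlace in the matricial sense. The self-adjointness hypothesis on $R_F(z)$ is exactly the identity $F_e^{\vee}F_o = F_o^{\vee}F_e$ that makes the Anderson-Jury Bezoutians $\mathbf{B}_{F_e^{\vee},(-F_o)^{\vee}}(F_e,-F_o)$ and $\mathbf{B}_{F_e^{\vee},(zF_o)^{\vee}}(F_e,zF_o)$ Hermitian, so this step is available.

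The second step is the block-diagonalization. Separating the coordinates carrying even powers of $z$ from those carrying odd powers is a congruence that splits $H(F)$ into the two Bezoutians attached to $R_F(w)=-F_o(w)/F_e(w)$ and $R_{z,F}(w)=wF_o(w)/F_e(w)$. To each I apply the Bezoutian-Hankel identity of Lemma \ref{LemBezoutHankel} with $P=F_e$: since $R_F$ is strictly proper, its Bezoutian equals a congruence of $\mathscr{H}_{m-1}(R_F)$ by the full-rank triangular coefficient array of $F_e$, while $R_{z,F}$ is proper with $\deg Q-\deg P\le 1$ and contributes $\mathscr{H}_{m-1}(R_{z,F})$. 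Carrying the signs through both congruences turns sign-definiteness of $H(F)$ into the simultaneous negative definiteness of $\mathscr{H}_{m-1}(R_F)$ and $\mathscr{H}_{m-1}(R_{z,F})$, and invertibility of the triangular arrays makes the equivalence two-sided. The odd case is identical after swapping $F_e\leftrightarrow F_o$; the only bookkeeping change is that $\wtilde R_{z,F}=-F_e/(zF_o)$ has denominator of degree $m+1$, which raises the relevant Hankel index to $\mathscr{H}_m(\wtilde R_{z,F})$.

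The main obstacle is the first step, together with the sign accounting in the two-sided equivalence. The scalar Hermite-Biehler argument rests on root interlacing and the argument principle, neither of which transfers directly; in the matrix setting I would instead work with invariant polynomials and the Bezoutian calculus of Lerer-Tismenetsky \cite{LT82}. The subtle point is that $R_F$ alone cannot locate the zeros in the left rather than the right half-plane, since $w=z^2$ maps both half-planes onto the same cut plane; it is exactly the second condition on $R_{z,F}$ (the extra factor $z$) that breaks this symmetry. Verifying that the two Hankel conditions are jointly -- and only jointly -- equivalent to left-half-plane localization, and that the congruences reproduce the correct indices in both parity cases, is where the genuine difficulty lies.
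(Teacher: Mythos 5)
This statement is not proved in the paper at all: it is imported verbatim from \cite[Theorem 4.4]{XD}, so there is no internal proof to compare your attempt against. What the paper does contain is the companion machinery --- Lemma \ref{LemBezoutHankel} and the proof of Theorem \ref{ThmAlterEven} --- and your second step (congruence of an Anderson--Jury Bezoutian to a block-diagonal of Hankel matrices by the triangular coefficient arrays of the denominator) is exactly the technique used there and, as far as one can tell, in \cite{XD} itself. So the overall architecture you propose is the intended one.

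The genuine gap is your first step. You reduce everything to the assertion that Hurwitz stability of $F(z)$ is equivalent to sign-definiteness of a Hermite--Fujiwara Bezoutian built from $F(z)$ and $F^{\vee}(-z)$, and you describe the target of that reduction as the pair $(F_e(w),F_o(w))$ being a ``positive pair'' with zeros of the invariant polynomials ``interlacing in the matricial sense.'' Neither of these is available off the shelf: the matrix Hermite--Fujiwara criterion requires an inertia theorem in the style of Lerer--Tismenetsky together with a coprimeness analysis of $F(z)$ and $F^{\vee}(-z)$ (definiteness, as opposed to semi-definiteness, encodes precisely the absence of common spectrum), and ``matricial interlacing'' is not a defined notion --- indeed the introduction of this paper and the example after Corollary \ref{CorHurwitzZero} show that the naive interlacing picture fails, since $F_e$ and $F_o$ can share zeros even for stable $F$. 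Since this is the entire analytic content of the theorem, asserting it does not constitute a proof. A second, smaller defect: you propose to deduce the odd case from the even case via $R_F=-(\wtilde R_F)^{-1}$ and $R_{z,F}=-(\wtilde R_{z,F})^{-1}$, but those identities presuppose that \emph{both} $F_e$ and $F_o$ are regular, whereas in the odd case the theorem's hypotheses only give regularity of $F_o$ (it is monic) and self-adjointness of $\wtilde R_F$; $F_e$ may be singular, which is exactly why the paper states the odd case in terms of $\wtilde R_F,\wtilde R_{z,F}$ and needs the separate Theorem \ref{ThmAlterOdd} for the situation where $F_e$ \emph{is} regular. The odd case must therefore be run directly with $F_o$ (respectively $zF_o$) as denominator, not obtained by inversion.
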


  This theorem, called \textit{stability criteria via matricial Markov parameters}, generalizes the corresponding classical results for real scalar polynomials (see Theorem~17 of~\cite[Chapter XV]{Gan}). 
For a monic matrix polynomial $F(z)$ of odd degree satisfying that $F_e(z)$ is regular,  two $p\times p$ rational matrix functions $R_F(z)$ and $R_{z,F}(z)$ 
 as in \eqref{R_F} are well-defined and self-adjoint as well. In this case,  an alternative version of the above stability criteria in the odd case  appear:

\begin{theorem}\label{ThmAlterOdd} \cite[Theorem 4.10]{XD} Under the assumption of the above theorem for  $\deg F=2m+1$, let $F_e(z)$ be regular.  The Hurwitz stability for $F(z)$ is equivalent to the negative definiteness of $\mathscr H_{m-1}(R_F)$, $\mathscr H_{m-1}(R_{z,F})$ and of the limit $\lim\limits_{z\rightarrow \infty} R_F(z)$.
\end{theorem}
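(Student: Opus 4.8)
The plan is to derive this alternative odd-degree criterion from the already available stability criterion of Theorem~\ref{ThmStabilityMarkov}(b) by routing both through the Herglotz--Nevanlinna property, with the extended Grommer theorem (Theorems~\ref{ProRHankel} and~\ref{ProHankelR}) serving as the dictionary between a self-adjoint rational matrix function and its block Hankel data. The backbone is the chain of equivalences: $F$ is Hurwitz stable; $\wtilde R_F$ and $\wtilde R_{z,F}$ are matrix-valued Herglotz--Nevanlinna functions; $R_F$ and $R_{z,F}$ are matrix-valued Herglotz--Nevanlinna functions; the three stated conditions hold. First I would record the identities $R_F=-(\wtilde R_F)^{-1}$ and $R_{z,F}=-(\wtilde R_{z,F})^{-1}$, which are legitimate because $F_e$ is regular (so that $R_F$ and $R_{z,F}$ are genuine self-adjoint rational matrix functions) and $F_o$ is monic.

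The outer two equivalences are two applications of one translation. Since $\wtilde R_F=F_eF_o^{-1}$ is proper with monic denominator $F_o$ of degree $m$, and $\wtilde R_{z,F}=-F_e(zF_o)^{-1}$ is strictly proper with denominator $zF_o$ of degree $m+1$, Theorems~\ref{ProRHankel} and~\ref{ProHankelR} match the negative-definiteness of $\mathscr H_{m-1}(\wtilde R_F)$ and $\mathscr H_m(\wtilde R_{z,F})$ --- exactly the hypotheses of Theorem~\ref{ThmStabilityMarkov}(b) --- with the Herglotz--Nevanlinna property of $\wtilde R_F$ and $\wtilde R_{z,F}$, the side conditions $\mathscr H_{-2,0}\succeq0$ and $|\deg Q-\deg P|\le1$ being automatic from properness and the degree count. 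The same translation applied to $R_F,R_{z,F}$ produces the three conditions in the statement: $R_F=-F_oF_e^{-1}$ is proper with monic denominator (the normalization of $F_e$) of degree $m$, so its Herglotz--Nevanlinna property reads $\mathscr H_{m-1}(R_F)\prec0$, while $R_{z,F}=-zR_F$ has numerator of degree one higher than its denominator. A short Laurent computation with $R_{z,F}=-zR_F$ shows that the matricial Markov parameters of $R_{z,F}$ are the shift $-\mathbf s_{j+1}$ of those of $R_F$ and that $\mathscr H_{-2,0}(R_{z,F})=-\lim_{z\to\infty}R_F(z)$; hence the Herglotz--Nevanlinna property of $R_{z,F}$ amounts to $\mathscr H_{m-1}(R_{z,F})\prec0$ together with $-\lim_{z\to\infty}R_F(z)\succeq0$, and the strict form $\lim_{z\to\infty}R_F(z)\prec0$ is the odd-degree refinement singled out in the statement. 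I would pin down this strictness by noting that the $(0,0)$-block of $\mathscr H_m(\wtilde R_{z,F})$ equals $-A_1$, with $A_1$ the leading coefficient of $F_e$, so the negative-definiteness in Theorem~\ref{ThmStabilityMarkov}(b) forces $A_1\succ0$, whence $\lim_{z\to\infty}R_F(z)=-A_1^{-1}\prec0$.

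The middle equivalence is where the negative-inverse relationship enters. If $\wtilde R_F$ is Herglotz--Nevanlinna I would invoke Proposition~\ref{ProRRepresent} to write it in the form~\eqref{RRepresent} with nonnegative coefficient matrices; supposing $\wtilde R_F(z_0)$ were singular at some $z_0\in\mathbb C_+$, say $\wtilde R_F(z_0)v=0$ with $v\neq0$, then $v^*{\rm Im}\,\wtilde R_F(z_0)v=0$ forces $v$ into the common kernel of all coefficient matrices in~\eqref{RRepresent}, so $\wtilde R_F(\cdot)v$ is constant and equal to $\wtilde R_F(z_0)v=0$, contradicting $\det\wtilde R_F=\det F_e/\det F_o\not\equiv0$. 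Thus $\wtilde R_F$ is invertible throughout $\mathbb C_+$, $R_F=-(\wtilde R_F)^{-1}$ is holomorphic there, and the pointwise identity ${\rm Im}(-W^{-1})=(W^{-1})^{*}({\rm Im}\,W)W^{-1}$ yields ${\rm Im}\,R_F\succeq0$; the same reasoning handles $R_{z,F}$, and the reverse direction is symmetric via $\wtilde R_F=-(R_F)^{-1}$. The strictness of the Hankel inequalities needs right coprimeness: in the forward direction it follows from Hurwitz stability, since a shared zero direction of $F_e$ and $F_o$ at $w_0=z_0^2$ would place both $z_0$ and $-z_0$ in $\sigma(F)$ and hence a zero of $F$ in the closed right half-plane; in the converse direction coprimeness is instead delivered as the conclusion of Theorem~\ref{ProHankelR}(iv) applied to $R_F$.

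I expect the main obstacle to be the degree and coprimeness bookkeeping that makes the limit condition emerge correctly. One must simultaneously check that $\lim_{z\to\infty}R_F(z)$ exists (equivalently that $\deg F_e=m$, i.e. $A_1$ is invertible), that its strict negativity corresponds to $\mathscr H_{-2,0}(R_{z,F})\succ0$ rather than the weak inequality returned by the bare Grommer theorem, and that this strictness is precisely what the coprimeness-and-regularity clause of Theorem~\ref{ProRHankel} supplies through its dichotomy ($A\equiv0_p$ or $A\succ0$) for the degree-one term of $R_{z,F}$. The secondary technical point is the invertibility of a nondegenerate Herglotz--Nevanlinna function on all of $\mathbb C_+$, which legitimizes the negative-inverse bridge; I would settle it through the partial-fraction form of Proposition~\ref{ProRRepresent} as sketched above.
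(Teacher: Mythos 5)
Your argument is correct in substance, but it is not the paper's route: Theorem~\ref{ThmAlterOdd} is imported here from \cite[Theorem 4.10]{XD} without proof, and the in-paper model for how such a statement is established is the proof of its even-degree twin, Theorem~\ref{ThmAlterEven}, which is a short, purely algebraic Bezoutian argument. There one writes a single Anderson--Jury Bezoutian in two ways via Lemma~\ref{LemBezoutHankel}: for the pair attached to $(F_e,zF_o)$, once through the strictly proper fraction $\wtilde R_{z,F}=-F_e(zF_o)^{-1}$ (denominator of degree $m+1$, producing a congruence with $\mathscr H_{m}(\wtilde R_{z,F})$) and once through the improper fraction $R_{z,F}=zF_oF_e^{-1}$ (producing a congruence with $\operatorname{diag}\bigl(\lim_{z\to\infty}R_F(z),\,\mathscr H_{m-1}(R_{z,F})\bigr)$, since the linear coefficient of the polynomial part of $R_{z,F}$ is $-\lim_{z\to\infty}R_F(z)$); doing the same for the pair $(F_e,F_o)$ identifies the definiteness of $\mathscr H_{m-1}(\wtilde R_F)$ with that of $\mathscr H_{m-1}(R_F)$, and Theorem~\ref{ThmStabilityMarkov}(b) then finishes in one line. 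Your round trip through the Herglotz--Nevanlinna property and the negative-inverse bridge also works, and your treatment of the two genuinely delicate points --- invertibility of a nondegenerate Herglotz--Nevanlinna function throughout $\mathbb C_+$ via the kernel argument on the representation~\eqref{RRepresent}, and the extraction of $\lim_{z\to\infty}R_F(z)\prec0$ from the $(0,0)$-block $-A_1$ of $\mathscr H_m(\wtilde R_{z,F})$ together with the dichotomy $A\equiv 0_p$ or $A\succ0$ of Theorem~\ref{ProRHankel} --- is sound; what it buys is that the theorem appears as an instance of the extended Grommer dictionary (essentially Corollary~\ref{LemHurwitzNevanlinna} run in both directions), at the cost of considerably more bookkeeping. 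Two cautions on that bookkeeping: your opening ``chain of equivalences'' is stated too strongly, since the Herglotz--Nevanlinna property alone yields only $\preceq0$ for the Hankel blocks, and the strict inequalities demanded by Theorem~\ref{ThmStabilityMarkov}(b) require the right coprimeness of $(F_e,F_o)$ and of $(F_e,zF_o)$ \emph{separately} (the former does not imply the latter unless $0\notin\sigma(F_e)$), so each of the four applications of Theorems~\ref{ProRHankel}/\ref{ProHankelR} must carry its own coprimeness certificate --- which your later paragraphs do supply, from stability in one direction and from Theorem~\ref{ProHankelR}(iv) in the other; and the fractions $R_F$, $R_{z,F}$ must be renormalized to the monic denominator $F_eA_1^{-1}$ before the lemmas of Section~\ref{SectionGrommer} apply, which is legitimate only after $A_1$ has been shown invertible, as you do.
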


More detailed consideration on these criteria can be found in \cite[Section 4]{XD}.  We add here a new criterion to address the case when the degree of  $F(z)$ is even:

\begin{theorem}\label{ThmAlterEven} Under the conditions of Theorem \ref{ThmStabilityMarkov} for $\deg F=2m$,  let $F_o(z)$ be regular. The Hurwitz stability for $F(z)$ is equivalent to the negative definiteness of $\mathscr H_{m-2}(\wtilde R_F)$, $\mathscr H_{m-1}(\wtilde R_{z,F})$ and of the limit $\lim\limits_{z\rightarrow \infty} \wtilde R_{z,F}(z)$.
\end{theorem}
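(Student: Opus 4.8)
The plan is to deduce Theorem \ref{ThmAlterEven} from the even-degree criterion Theorem \ref{ThmStabilityMarkov}(a) by transporting every assertion about the pair $(R_F,R_{z,F})$ to the reciprocal pair $(\wtilde R_F,\wtilde R_{z,F})$. Three facts will drive the argument. First, the identities $R_F=-(\wtilde R_F)^{-1}$ and $R_{z,F}=-(\wtilde R_{z,F})^{-1}$ recorded just before Theorem \ref{ThmStabilityMarkov}. Second, the elementary principle that a matrix-valued Herglotz-Nevanlinna function $R$ with $\det R\not\equiv 0$ on $\mathbb C_+$ is invertible at every point of $\mathbb C_+$ and that $-R^{-1}$ is again Herglotz-Nevanlinna; here $\det R_F\not\equiv 0$ and $\det R_{z,F}\not\equiv 0$ because $F_o$ is regular by hypothesis. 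Third, the extended Grommer theorem (Theorems \ref{ProRHankel} and \ref{ProHankelR}) and Lemma \ref{LemBezoutHankel}, which translate between the Herglotz-Nevanlinna property, block-Hankel definiteness, and right coprimeness. The degree bookkeeping that aligns the indices is: $\wtilde R_F=F_e/F_o$ has denominator $F_o$ of degree $m-1$, hence critical block $\mathscr H_{m-2}(\wtilde R_F)$, while $\wtilde R_{z,F}=-F_e/(zF_o)$ has denominator $zF_o$ of degree $m$, hence critical block $\mathscr H_{m-1}(\wtilde R_{z,F})$; moreover $\wtilde R_F=zA_1^{-1}+O(1)$ and $\wtilde R_{z,F}=-A_1^{-1}+O(1/z)$, so $\mathscr H_{-2,0}(\wtilde R_F)=A_1^{-1}$, $\mathscr H_{-2,0}(\wtilde R_{z,F})=0$ and $\lim_{z\to\infty}\wtilde R_{z,F}=-A_1^{-1}$.

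For necessity I would start from a Hurwitz-stable $F$. Theorem \ref{ThmStabilityMarkov}(a) gives $\mathscr H_{m-1}(R_F)\prec 0$ and $\mathscr H_{m-1}(R_{z,F})\prec 0$. Since $R_F$ is strictly proper and $R_{z,F}$ is proper, both have $\mathscr H_{-2,0}=0\succeq 0$ and degree gap at most $1$, so Theorem \ref{ProHankelR} renders $R_F,R_{z,F}$ Herglotz-Nevanlinna. The top-left block ${\bf s}_0$ of $\mathscr H_{m-1}(R_F)$ equals $-A_1$, so $A_1\succ 0$ is invertible (hence $\deg F_o=m-1$) and $\lim_{z\to\infty}\wtilde R_{z,F}=-A_1^{-1}\prec 0$ at once. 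Because $F_o$ is regular, Lemma \ref{LemBezoutHankel}(b) applied to $R_F$ and to $R_{z,F}$ converts the full rank of the two Hankel matrices into the right coprimeness of $F_e,F_o$ and of $F_e,zF_o$. Inverting via the second fact above would make $\wtilde R_F,\wtilde R_{z,F}$ Herglotz-Nevanlinna, and Theorem \ref{ProRHankel}, fed with these coprimeness relations and the regularity of $F_e$, would yield the remaining strict inequalities $\mathscr H_{m-2}(\wtilde R_F)\prec 0$ and $\mathscr H_{m-1}(\wtilde R_{z,F})\prec 0$.

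For sufficiency I would assume the three stated conditions. The limit condition forces $A_1\succ 0$, hence $\deg F_o=m-1$ and $\mathscr H_{-2,0}(\wtilde R_F)=A_1^{-1}\succ 0$. Theorem \ref{ProHankelR} applied to $\wtilde R_{z,F}$ (using $\mathscr H_{-2,0}=0\succeq 0$, $\mathscr H_{m-1}(\wtilde R_{z,F})\prec 0$, degree gap $0$) and to $\wtilde R_F$ (using $\mathscr H_{-2,0}=A_1^{-1}\succeq 0$, $\mathscr H_{m-2}(\wtilde R_F)\prec 0$, degree gap $1$) would show that both are Herglotz-Nevanlinna and, by part (iv), that $F_e,zF_o$ and $F_e,F_o$ are right coprime. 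Inverting once more turns $R_F=-(\wtilde R_F)^{-1}$ and $R_{z,F}=-(\wtilde R_{z,F})^{-1}$ into Herglotz-Nevanlinna functions, and Theorem \ref{ProRHankel}, now with the coprimeness just obtained and the regularity of $F_o$, would return $\mathscr H_{m-1}(R_F)\prec 0$ and $\mathscr H_{m-1}(R_{z,F})\prec 0$, so that Theorem \ref{ThmStabilityMarkov}(a) certifies $F$ to be Hurwitz stable.

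The part needing the most care will be the behaviour at infinity and the legitimacy of the inversion. On the analytic side, one must verify the second fact above: a matrix-valued Herglotz-Nevanlinna $R$ with $\det R\not\equiv 0$ cannot be singular at an interior point $z_0\in\mathbb C_+$, since a null vector $v$ of $R(z_0)$ makes $v^{*}Rv$ a scalar Herglotz-Nevanlinna function vanishing at $z_0$, which by the minimum principle is identically zero; combined with the representation \eqref{RRepresent} (with $A,E_j\succeq 0$ from Proposition \ref{ProRRepresent}) this forces $R(z)v\equiv 0$ and hence $\det R\equiv 0$, a contradiction, after which ${\rm Im}(-R^{-1})=(R^{-1})^{*}({\rm Im}\,R)R^{-1}\succeq 0$ closes the point. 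On the algebraic side, the single matrix condition $\lim_{z\to\infty}\wtilde R_{z,F}\prec 0$ must be seen to do double duty: it both supplies the hypothesis $\mathscr H_{-2,0}(\wtilde R_F)\succeq 0$ that Theorem \ref{ProHankelR} requires for $\wtilde R_F$ and pins down $\deg F_o=m-1$, so that the indices $m-2$ and $m-1$ are the correct ones; tracking this coefficient through the inversion is the crux of the bookkeeping.
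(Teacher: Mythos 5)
Your argument is correct in substance, but it takes a genuinely different route from the paper's. The paper's proof never leaves the algebraic level: it writes the single Anderson--Jury Bezoutian $\mathbf B_{F_o^{\vee},F_e^{\vee}}(F_o,F_e)$ in two ways using Lemma \ref{LemBezoutHankel} --- once via part (a) for $\wtilde R_F=F_e/F_o$, producing the block $\operatorname{diag}\bigl(\lim_{z\to\infty}\wtilde R_{z,F}(z),\,\mathscr H_{m-2}(\wtilde R_F)\bigr)$ up to congruence, and once via part (b) for the strictly proper $R_F=-F_o/F_e$, producing $\mathscr H_{m-1}(R_F)$ up to congruence --- and then reads off from Sylvester's law of inertia that negative definiteness of the pair $\{\lim_{z\to\infty}\wtilde R_{z,F}(z),\,\mathscr H_{m-2}(\wtilde R_F)\}$ is equivalent to that of $\mathscr H_{m-1}(R_F)$ (analogously for the $z$-versions), after which Theorem \ref{ThmStabilityMarkov}(a) finishes. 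You instead pass through the analytic layer: Theorem \ref{ProHankelR} to certify the Herglotz--Nevanlinna property and coprimeness, the (correctly justified) invariance of the Herglotz--Nevanlinna class under $R\mapsto -R^{-1}$ when $\det R\not\equiv 0$, and Theorem \ref{ProRHankel} to return to Hankel definiteness. This costs more machinery but yields more along the way --- the Herglotz--Nevanlinna property of all four functions and the right coprimeness of $F_e$ with $F_o$ and with $zF_o$, facts the paper only extracts later in Theorems \ref{ThmHurwitz}--\ref{ThmHurwitzodd}. Two small points you should make explicit: the matrix fraction \eqref{RQP} requires a monic denominator, so before invoking Theorems \ref{ProRHankel} and \ref{ProHankelR} for $\wtilde R_F$ and $\wtilde R_{z,F}$ you must pass to $F_oA_1^{-1}$ (harmless once $A_1\succ 0$, since this changes neither the Laurent coefficients nor the coprimeness relations); and the hypothesis $\lim_{z\to\infty}\wtilde R_{z,F}(z)\prec 0$ presupposes that the limit exists finitely, which is what lets you identify it with $-A_1^{-1}$ by comparing with $\lim_{z\to\infty}R_{z,F}(z)=A_1$ through the identity $R_{z,F}=-(\wtilde R_{z,F})^{-1}$.
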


\begin{proof} Suppose that $\wtilde A:=\lim\limits_{z\rightarrow \infty}  \wtilde R_{z,F}(z) $, $F_o(z)=:\sum_{k=0}^{m-1} O_{m-1-k} z^k$ and $F_e(z)=:\sum_{k=0}^{m} E_{m-k} z^k$. 
Using Lemma \ref{LemBezoutHankel} we have
\begin{equation*}
 B_{F_o^{\vee},F_{e}^{\vee}}(F_o,F_{e})=\begin{bmatrix}
O_{m-1}^* & \cdots & O_{0}^*\\
\vdots & \iddots & \\
O_{0}^* &&
\end{bmatrix}\cdot
\begin{bmatrix}\wtilde A & \\ & \mathscr H_{m-2}(\wtilde R_F) \end{bmatrix}\cdot
\begin{bmatrix}
O_{m-1} & \cdots & O_{0}\\
\vdots & \iddots & \\
O_{0} &&
\end{bmatrix}
\end{equation*}
and, on the other hand,
\begin{equation*}
B_{F_o^{\vee},F_{e}^{\vee}}(F_o,F_{e})
=B_{F_e^{\vee},-F_{o}^{\vee}}(F_e,-F_{o})
=\begin{bmatrix}
E_{m-1}^* & \cdots & E_{0}^*\\
\vdots & \iddots & \\
E_{0}^* &&
\end{bmatrix}\cdot
 \mathscr H_{m-1}(R_{F}) \cdot
\begin{bmatrix}
E_{m-1} & \cdots & E_{0}\\
\vdots & \iddots & \\
E_{0} &&
\end{bmatrix}.
\end{equation*}
Therefore, the negative definiteness of both $\wtilde A$ and  $\mathscr H_{m-2}(\wtilde R_F)$ is equivalent to that of  $\mathscr H_{m-1}(R_F)$. Analogously, the negative definiteness of $\mathscr H_{m-1}(\wtilde R_{z,F})$ is equivalent to that of  $\mathscr H_{m-1}(R_{z,F})$. Then the proof is complete due to (a) of Theorem \ref{ThmStabilityMarkov}.
\end{proof}

Now we reshape these stability criteria for matrix polynomials $F(z)$ in terms of the Herglotz-Nevanlinna property for $R_F(z)$,  which differs with regard to the degree of $F(z)$ by the same token:

\begin{theorem}\label{ThmHurwitz}
Let $F(z)$ be  a monic $p\times p$ matrix polynomial with  the even part $F_e(z)$  and the
odd part $F_o(z)$. Assume that $F_e(z)$ is regular when $\deg F$ is odd and $R_F(z)$ is the self-adjoint rational matrix function as  \eqref{R_F}.
$F(z)$ is Hurwitz stable if and only if the following conditions are simutaneously true:
 \begin{enumerate}
  \item[(a)]  $R_F(z)$ is  a matrix-valued Herglotz-Nevanlinna function;
 \item[(b)] $F_e(z)$ and $F_o(z)$ are right coprime;
   \item[(c)] 
All zeros of $F_e(z)$ are negative real;
  \item[(d)] $F_o(z)$ is regular when $\deg F$ is even;
   \item[(e)] $\lim\limits_{z\rightarrow \infty} R_F(z)\prec 0$
   when $\deg F$ is odd.
\end{enumerate}
In this case, $F_e(z)$ is simple.
\end{theorem}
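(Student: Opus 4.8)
The plan is to prove both implications by treating the parities of $\deg F$ separately and using two translation devices. The first is the stability criteria via matricial Markov parameters: Theorem \ref{ThmStabilityMarkov}(a) in the even case $\deg F=2m$ (where Hurwitz stability is the pair $\mathscr H_{m-1}(R_F)\prec0$, $\mathscr H_{m-1}(R_{z,F})\prec0$) and Theorem \ref{ThmAlterOdd} in the odd case $\deg F=2m+1$ (where it is $\mathscr H_{m-1}(R_F)\prec0$, $\mathscr H_{m-1}(R_{z,F})\prec0$ together with $\lim_{z\to\infty}R_F(z)\prec0$). The second is the extended Grommer theorem: Theorem \ref{ProHankelR} turns a negative-definite Hankel matrix into the Herglotz-Nevanlinna property plus coprimeness, simplicity and real spectrum, while Theorem \ref{ProRHankel} runs the converse. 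Throughout, the matrix fraction \eqref{RQP} of both $R_F$ and $R_{z,F}=-zR_F$ has denominator $F_e$ and numerators $-F_o$ and $zF_o$; I would first record the degree bookkeeping, namely that $R_F$ is strictly proper in the even case and proper in the odd case so that $\mathscr H_{-2,0}(R_F)=0_p$ and $|\deg F_o-\deg F_e|\le1$, while the regularity of $F_e$ with condition (e) forces $\deg F_e=\deg F_o=m$ in the odd case and makes $\mathscr H_{-2,0}(R_{z,F})=-\lim_{z\to\infty}R_F(z)\succeq0$. These facts are exactly what make the two Grommer theorems applicable to each function.

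For the ``only if'' implication I would assume $F$ Hurwitz stable, invoke the relevant Markov criterion, and feed $\mathscr H_{m-1}(R_F)\prec0$ into Theorem \ref{ProHankelR}; this at once yields (a), (b), the reality $\sigma(F_e)\subseteq\mathbb R$, and the concluding claim that $F_e$ is simple. The crux is upgrading reality to strict negativity in (c). For this I would write $R_F$ in its partial fraction \eqref{RRepresent}, $R_F(z)=B+\sum_jE_j/(\lambda_j-z)$ with $E_j\succeq0$ (Proposition \ref{ProRRepresent}), observe that the companion condition $\mathscr H_{m-1}(R_{z,F})\prec0$ makes $R_{z,F}$ a Herglotz-Nevanlinna function as well (again by Theorem \ref{ProHankelR}), and read off its residues $-\lambda_jE_j$: their nonnegativity together with $E_j\neq0$ (Corollary \ref{CorMassE_j}, using coprimeness) forces $\lambda_j\le0$, and the identity $F(0)=F_e(0)$ rules out $\lambda_j=0$ since a zero of $F_e$ at the origin would place $0\in\sigma(F)$. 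Condition (e) is immediate, being part of Theorem \ref{ThmAlterOdd}; condition (d) I would obtain from $\mathscr H_{m-1}(R_{z,F})\prec0$ by passing through the associated Anderson-Jury Bezoutian (Lemma \ref{LemBezoutHankel}), whose invertibility, via the coprimeness and regularity characterization of \cite{LT82}, forces $zF_o$ and hence $F_o$ to be regular.

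For the ``if'' implication I would reverse the flow. From (a) and (b), with $F_o$ regular supplied by (d) in the even case and automatic in the odd case, Theorem \ref{ProRHankel} gives $\mathscr H_{m-1}(R_F)\prec0$; condition (c) makes all residues $-\lambda_jE_j\succeq0$, so $R_{z,F}=-zR_F$ is again Herglotz-Nevanlinna, while (c) also forces $0\notin\sigma(F_e)$ and thereby the right coprimeness of $zF_o$ and $F_e$, so a second application of Theorem \ref{ProRHankel} delivers $\mathscr H_{m-1}(R_{z,F})\prec0$; adjoining (e) in the odd case, the full Markov criterion then returns Hurwitz stability, and simplicity of $F_e$ follows once more from Theorem \ref{ProHankelR}. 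The step I expect to be most delicate is the handling of condition (d): it is a genuinely independent requirement, since conditions (a)--(c) can hold for a non-Hurwitz $F$ when $F_o$ is singular, the degenerate directions of $F_o$ producing purely imaginary zeros exactly as $f_o\equiv0$ does in the scalar case. Consequently both its necessity and its interaction with the coprimeness of $zF_o$ and $F_e$ must be argued carefully rather than read off from the Herglotz-Nevanlinna property alone.
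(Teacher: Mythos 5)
Your proposal follows essentially the same route as the paper's proof: Hurwitz stability is translated into $\mathscr H_{m-1}(R_F)\prec 0$ and $\mathscr H_{m-1}(R_{z,F})\prec 0$ via Theorems \ref{ThmStabilityMarkov} and \ref{ThmAlterOdd}, and these conditions are exchanged with the Herglotz-Nevanlinna property through Theorems \ref{ProRHankel} and \ref{ProHankelR} together with the partial-fraction form of $R_{z,F}$ coming from Proposition \ref{ProRRepresent}; you are in fact more explicit than the paper on the necessity of (d) and on ruling out $\lambda_j=0$. The one inaccuracy is the parenthetical appeal to Corollary \ref{CorMassE_j} to obtain $E_j\neq 0_p$ from right coprimeness alone: that corollary gives $E_j\neq 0_p$ only under the strictly stronger hypothesis that $F_e(z)$ and $F_o(z)$ have \emph{no common zeros} (which coprimeness does not imply, as the paper's $2\times 2$ example shows), so $E_j\neq 0_p$ (indeed $\rank E_j=l_j$) should instead be extracted from $\mathscr H_{m-1}(R_F)\prec 0$ by a rank count, using $\rank E_j\le l_j$ and $\sum_j l_j=pm$.
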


\begin{proof}  
We firstly give a proof for the case that $\deg F$ is even.  

Proposition \ref{ProRRepresent} shows that $R_F(z)$ has the form \eqref{RRepresent}.
 By  Lemma 6.3-11 of \cite{Kai}, $R_F(z)$ is strictly proper.  So both coefficients $A$ and $B$ in  the expression \eqref{RRepresent}  of $R_F(z)$ equal  $0_p$. 
By setting the matrix function 
$R_{z,F}(z)$ as in \eqref{R_F},
we have
\begin{equation}\label{RzFRepresent}
R_{z,F}(z)=-\sum_{j=1}^r E_j+ \sum_{j=1}^r  \dfrac{-\lambda_j E_j}{\lambda_j-z}.
\end{equation}

The proof for the ``if'' implication: Suppose that the statements {\rm (a)}--{\rm (d)} hold. The statement (c) and Proposition \ref{ProRRepresent} imply  that  $\{\lambda_j\}_{j=1}^r$ coincides with $\sigma(F_e)$ and $-\lambda_j> 0$. Using Proposition \ref{ProRRepresent} again, we see that 
 $R_{z,F}(z)$ is  also a matrix-valued Herglotz-Nevanlinna function. 
 Moreover, the right coprimeness of $zF_o(z)$ and $F_e(z)$ follows from (b), (c) and Lemma 6.3-6 in \cite{Kai}.  Thus an application of Theorem \ref{ProRHankel} and (a) of Theorem  \ref{ThmStabilityMarkov}  completes the proof. 
  
The proof for the ``only if'' implication:  Suppose that  $F(z)$ is Hurwitz stable. A combination of Theorem \ref{ProHankelR} and  Theorem  \ref{ThmStabilityMarkov} (a) reveals 
the statements  (a)--(b), the Herglotz-Nevanlinna property of $R_{z,F}(z)$ and  that $F_e(z)$ is simple. 
The statement (c) is proved by contradiction:
Assume that there exists a zero $\lambda_j$ of $F_e(z)$ with multiplicity $l_j$ such that $\lambda_j\geq 0$. As is seen from \eqref{RzFRepresent} and  Proposition \ref{ProRRepresent}, $R_{z,F}(z)$ cannot be a matrix-valued Herglotz-Nevanlinna function.

With the help of Theorem \ref{ThmAlterOdd}, the proof for the odd case  can be conducted analogously to that  for the even case.

\end{proof}

When $F(z)$ is a real scalar polynomial, Theorem \ref{ThmHurwitz} coincides with the classical stability criterion via Herglotz-Nevanlinna functions. On the other hand, to apply this matricial version in the odd case, the tested matrix polynomial needs to satisfy a precondition that its even part is regular. To avoid this restriction, we give the following alternative extension which also covers the even case:

\begin{theorem}\label{ThmHurwitzodd}
Let $F(z)$ be  a monic $p\times p$ matrix polynomial with  the even part $F_e(z)$  and the
odd part $F_o(z)$.   
Assume that $F_o(z)$ is regular when $\deg F$ is even and $\wtilde R_{z,F}(z)$ is the self-adjoint rational matrix function as  in \eqref{tildeRzF}.
 $F(z)$ is Hurwitz stable if and only if
the following statements are simutaneously true:
 \begin{enumerate}
    \item[(a)] $\wtilde R_{z,F}(z)$ is  a matrix-valued Herglotz-Nevanlinna function;
       \item[(b)] $F_e(z)$ and $F_o(z)$  are right coprime;
    \item[(c)]   All zeros of  $F_o(z)$ are negative real;
    \item[(d)]  $0\not\in \sigma(F_e)$;
     \item[(e)] 
$\lim\limits_{z\rightarrow \infty} \wtilde R_{z,F}(z)\prec 0$
   when $\deg F$ is even.
\end{enumerate}
In this case, $F_o(z)$ is simple.
\end{theorem}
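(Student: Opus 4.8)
The plan is to follow the proof of Theorem~\ref{ThmHurwitz} line by line, interchanging the roles of $F_e(z)$ and $F_o(z)$ and promoting $\wtilde R_{z,F}(z)$ to the status that $R_F(z)$ had there. The structural reason this should work is that the relation $\wtilde R_F(z)=-z\,\wtilde R_{z,F}(z)$ coming from \eqref{tildeR_F}--\eqref{tildeRzF} is the precise analogue of $R_{z,F}(z)=-zR_F(z)$, so the pair $(\wtilde R_{z,F},\wtilde R_F)$ here behaves like $(R_F,R_{z,F})$ there. The one genuinely new ingredient is the factor $z$ in the denominator $zF_o(z)$ of $\wtilde R_{z,F}(z)$, which threatens an extra pole at the origin; condition (d) is exactly what neutralizes it, and I expect it to carry most of the novel work.

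I would split on the parity of $\deg F$ and treat the odd case $\deg F=2m+1$ in full, deferring the even case to an analogous argument. Here $F_o(z)$ is automatically monic of degree $m$ and $\wtilde R_{z,F}(z)$ is strictly proper with monic denominator $zF_o(z)$ of degree $m+1$, so it occupies the same position as the strictly proper $R_F(z)$ in the even case of Theorem~\ref{ThmHurwitz}. Assuming its Herglotz-Nevanlinna property and invoking Proposition~\ref{ProRRepresent}, I would write $\wtilde R_{z,F}(z)=\sum_j \tilde E_j/(\mu_j-z)$ with $\tilde E_j\succeq 0$ and $\{\mu_j\}\subseteq\sigma(F_o)\cup\{0\}=\sigma(zF_o)$, and then multiply by $-z$ to obtain the companion expansion $\wtilde R_F(z)=\sum_j\tilde E_j-\sum_{\mu_j\neq 0}\mu_j\tilde E_j/(\mu_j-z)$, the counterpart of \eqref{RzFRepresent}, in which the pole of $\wtilde R_{z,F}$ at the origin has been absorbed into the constant term.

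For the ``only if'' implication I would start from Theorem~\ref{ThmStabilityMarkov}(b), which supplies $\mathscr H_{m-1}(\wtilde R_F)\prec 0$ and $\mathscr H_{m}(\wtilde R_{z,F})\prec 0$ without any regularity assumption on $F_e$. Feeding $\wtilde R_F$ (denominator $F_o$ of degree $m$, with $\mathscr H_{-2,0}(\wtilde R_F)=0$ by properness) into Theorem~\ref{ProHankelR} yields at once that $\wtilde R_F$ is Herglotz-Nevanlinna, that $F_o$ is simple with $\sigma(F_o)\subseteq\mathbb R$, and that $F_e,F_o$ are right coprime, i.e.\ (b) and the final assertion. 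Feeding $\wtilde R_{z,F}$ in likewise gives (a) together with the right coprimeness of $zF_o$ and $F_e$ and the simplicity of $zF_o$; evaluating the full-column-rank criterion for right coprimeness (cf.\ \cite{Kai}) at $z=0$ reads off (d), while simplicity of $zF_o$ forces $0\notin\sigma(F_o)$ (the factor $z$ already saturates the multiplicity of the origin). Condition (c) then follows by contradiction as in Theorem~\ref{ThmHurwitz}: right coprimeness makes every $\mu_j\in\sigma(F_o)$ a genuine simple pole of $\wtilde R_F$ with nonzero residue, so $\tilde E_j\neq 0$, the mass of the Herglotz-Nevanlinna function $\wtilde R_F$ there equals $-\mu_j\tilde E_j\succeq 0$, and with $\mu_j$ real and nonzero this forces $\mu_j<0$. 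For the converse I would run the machine backwards: (d), being $\det F_e(0)\neq 0$, makes $F_e$ regular and, with (b), restores the right coprimeness of $zF_o$ and $F_e$; (c) renders each mass $-\mu_j\tilde E_j\succeq 0$, so Proposition~\ref{ProRRepresent} promotes $\wtilde R_F$ to a Herglotz-Nevanlinna function as well; Theorem~\ref{ProRHankel} (now applicable since $F_e$ is regular) delivers $\mathscr H_{m-1}(\wtilde R_F)\prec 0$ and $\mathscr H_{m}(\wtilde R_{z,F})\prec 0$, and Theorem~\ref{ThmStabilityMarkov}(b) concludes.

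For the even case $\deg F=2m$ I would repeat the argument with Theorem~\ref{ThmAlterEven} in place of Theorem~\ref{ThmStabilityMarkov}(b): now $\wtilde R_{z,F}$ is merely proper, its Laurent series still has no $z^1$-term so $\mathscr H_{-2,0}(\wtilde R_{z,F})=0$, and its constant term is $\lim_{z\to\infty}\wtilde R_{z,F}(z)$. Condition (e), $\lim_{z\to\infty}\wtilde R_{z,F}(z)\prec 0$, is precisely the extra negative-definite limit required by Theorem~\ref{ThmAlterEven}; since this limit equals $-\mathscr H_{-2,0}(\wtilde R_F)$, it is also exactly the hypothesis $\mathscr H_{-2,0}(\wtilde R_F)\succeq 0$ that Theorem~\ref{ProHankelR} imposes on $\wtilde R_F$, and it certifies that the leading coefficient $A_1$ of $F_o$ is positive definite, hence invertible, keeping the monic-denominator normalization clean. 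The step I expect to be the main obstacle is the bookkeeping at the origin: transferring simplicity and right coprimeness between $zF_o(z)$ and $F_o(z)$, deducing $0\notin\sigma(F_o)$, and recognizing that condition (d) simultaneously secures the regularity of $F_e$ and the coprimeness of $zF_o$ and $F_e$ at $z=0$ --- none of which has a counterpart in the proof of Theorem~\ref{ThmHurwitz}.
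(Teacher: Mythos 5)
Your proposal is correct and follows essentially the same route as the paper, which likewise reduces the claim to Theorem \ref{ThmStabilityMarkov}(b) (odd degree) and Theorem \ref{ThmAlterEven} (even degree) and then reruns the proof of Theorem \ref{ThmHurwitz} with $\wtilde R_{z,F}(z)$ in the role of $R_F(z)$, translating the resulting conditions on $zF_o(z)$ and $F_e(z)$ into (b)--(d) via the rank criterion for right coprimeness at $z=0$ and the simplicity of $zF_o(z)$. Your write-up in fact supplies more of the bookkeeping at the origin than the paper's brief sketch does.
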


\begin{proof}
The  proof can be conducted analogously to that of Theorem \ref{ThmHurwitz}, in placing the matrix function $R_F(z)$ as in \eqref{R_F} with $\wtilde R_{z,F}(z)$ as in \eqref{tildeRzF}: For the Hurwitz stability of $F(z)$, by Theorem \ref{ThmStabilityMarkov} (b) (for the odd case) and Theorem \ref{ThmAlterEven} (for the even case) it is equivalent that the statements (a) and (e) hold and
 \begin{enumerate}
       \item[{\rm (b')}] $F_e(z)$ and $zF_o(z)$  are right coprime;
    \item[{\rm (c')}]   All zeros of  $zF_o(z)$ except for $0$ are negative real;
    \item[{\rm (d')}]  $F_e(z)$ is regular when $\deg F$ is odd.
\end{enumerate}
In this case, both $F_o(z)$ and $zF_o(z)$ are simple. Then the assertion is evident, since the statements (b') and (d') are equivalent to (b) and (d) due to  Lemma 6.3-6 in \cite{Kai} and the statement (c) implies (c') and, conversely, follows from (c') and the statement that $F_o(z)$ is simple. 
\end{proof}

From the proofs of Theorems \ref{ThmHurwitz} and \ref{ThmHurwitzodd} one can see that
\begin{corollary}\label{LemHurwitzNevanlinna} 
    Let $F(z)$ be a monic $p\times p$ matrix polynomial  with  the even part $F_e(z)$  and the
odd part $F_o(z)$. 
\begin{enumerate}
\item[(a)] For $\deg F=2m$, suppose that $R_F(z)$ and $R_{z,F}(z)$ are the self-adjoint rational matrix functions as  \eqref{R_F}. $F(z)$ is Hurwitz stable if and only if $R_F(z)$ and $R_{z,F}(z)$ are both matrix-valued Herglotz-Nevanlinna functions, $F_e(z)$ and $zF_o(z)$ are right coprime and $F_o(z)$ is regular. 
\item[(b)] For $\deg F=2m+1$, suppose that  $\wtilde R_F(z)$ and $\wtilde R_{z,F}(z)$ are the self-adjoint rational matrix functions as  \eqref{tildeR_F}--\eqref{tildeRzF}. $F(z)$ is Hurwitz stable if and only if $\wtilde R_F(z)$ and $\wtilde R_{z,F}(z)$ are both matrix-valued Herglotz-Nevanlinna functions, $F_e(z)$ and $zF_o(z)$ are right coprime and $F_e(z)$ is regular.
\end{enumerate}
    \end{corollary}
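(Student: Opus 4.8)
The plan is to read the statement off the two ingredients already assembled in the paper: Theorem \ref{ThmStabilityMarkov} reduces Hurwitz stability to the negative definiteness of a pair of block Hankel matrices, while the Grommer-type equivalence of Theorems \ref{ProRHankel} and \ref{ProHankelR} converts each such Hankel condition into the Herglotz-Nevanlinna property of the corresponding rational matrix function together with coprimeness and regularity data. I would carry out the even case (a) in detail; the odd case (b) is entirely parallel, with $R_F$ and $R_{z,F}$ replaced by $\wtilde R_F$ and $\wtilde R_{z,F}$, with $F_o$ (monic of degree $m$) now serving as the denominator, and with Theorem \ref{ThmStabilityMarkov}(b) and Theorem \ref{ThmHurwitzodd} in place of their even-degree analogues.

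First I would record the automatic hypotheses. When $\deg F=2m$, the even part $F_e$ is monic of degree $m$ (hence regular) and $\deg F_o\le m-1$, so $R_F=-F_o/F_e$ is strictly proper and $R_{z,F}=zF_o/F_e$ is proper. Consequently $\mathscr H_{-2,0}(R_F)=\mathscr H_{-2,0}(R_{z,F})=0_p\succeq 0$, while $\deg(-F_o)-\deg F_e\le-1$ and $\deg(zF_o)-\deg F_e\le 0$ are both at most $1$. In particular the hypotheses $\mathscr H_{-2,0}\succeq 0$ and $\deg Q-\deg P\le 1$ of Theorem \ref{ProHankelR} hold automatically for both $R_F$ (with denominator $P=F_e$, so that $\deg P-1=m-1$) and $R_{z,F}$ (again with $P=F_e$ and numerator $Q=zF_o$).

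For the ``if'' direction, suppose the four listed conditions hold. Since any common right divisor of $F_e$ and $F_o$ is a right divisor of $zF_o$, the right coprimeness of $F_e$ and $zF_o$ forces that of $F_e$ and $F_o$; and $F_o$ regular gives $zF_o$ regular. Feeding $R_F$ (Herglotz-Nevanlinna, with $F_e,F_o$ right coprime and $F_o$ regular) and $R_{z,F}$ (Herglotz-Nevanlinna, with $F_e,zF_o$ right coprime and $zF_o$ regular) into the last assertion of Theorem \ref{ProRHankel} yields $\mathscr H_{m-1}(R_F)\prec 0$ and $\mathscr H_{m-1}(R_{z,F})\prec 0$, whence $F$ is Hurwitz stable by Theorem \ref{ThmStabilityMarkov}(a).

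For the ``only if'' direction, Theorem \ref{ThmStabilityMarkov}(a) turns Hurwitz stability into $\mathscr H_{m-1}(R_F)\prec 0$ and $\mathscr H_{m-1}(R_{z,F})\prec 0$. Applying Theorem \ref{ProHankelR} to $R_F$ and to $R_{z,F}$ then delivers at once the Herglotz-Nevanlinna property of both functions together with the right coprimeness of $F_e,F_o$ and of $F_e,zF_o$. The one item this machinery does not produce is the regularity of $F_o$: the Bezoutian factorization of Lemma \ref{LemBezoutHankel} (its $A=0_p$ branch, which is the relevant one here since $R_F$ and $R_{z,F}$ carry no linear term) recovers coprimeness from the full rank of the Hankel matrix only after regularity of the numerator has been presupposed, so it cannot manufacture that regularity. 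This is the main obstacle. I would resolve it by invoking Theorem \ref{ThmHurwitz} itself: its even-degree characterization lists ``$F_o$ regular'' as condition (d), so a Hurwitz stable $F$ of even degree automatically has $F_o$ regular. In the odd case the corresponding missing datum, regularity of $F_e$, is supplied by condition (d) of Theorem \ref{ThmHurwitzodd}, namely $0\notin\sigma(F_e)$, which forces $\det F_e\not\equiv 0$. Collecting the four conditions completes (a), and the same chain of reasoning run through Theorem \ref{ThmStabilityMarkov}(b), Theorem \ref{ProHankelR}, Theorem \ref{ProRHankel}, and Theorem \ref{ThmHurwitzodd} completes (b).
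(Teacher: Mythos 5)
Your proposal is correct and follows essentially the same route the paper intends: the paper derives this corollary from the proofs of Theorems \ref{ThmHurwitz} and \ref{ThmHurwitzodd}, which rest on exactly the combination of Theorem \ref{ThmStabilityMarkov} (resp.\ Theorem \ref{ThmAlterOdd}/\ref{ThmAlterEven}) with the Grommer-type Theorems \ref{ProRHankel} and \ref{ProHankelR} that you use. Your handling of the one genuine subtlety -- the regularity of the numerator in the ``only if'' direction -- via condition (d) of Theorems \ref{ThmHurwitz} and \ref{ThmHurwitzodd} is legitimate (and could alternatively be obtained directly, since $\mathscr H_{m-1}(R_F)\prec 0$ forces its leading block ${\bf s}_0$, which is minus the leading coefficient of $F_o$, to be negative definite).
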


In view of \cite[P. 11]{XD}, given a stable monic matrix
polynomial~$F(z)$ of odd degree, $\wtilde R_{z,F}(z)$  of the form  \eqref{tildeRzF} is  self-adjoint if and only if the even part $F_e(z)$ is regular and $R_F(z)$  of the form  \eqref{R_F} is  self-adjoint as well. In other words, a matrix polynomial of odd degree whose stability can be tested via
Theorem~\ref{ThmHurwitzodd}, but not via Theorem~\ref{ThmHurwitz},
cannot be Hurwitz stable. Analogously,  a matrix polynomial of even degree whose stability can be tested via
Theorem~\ref{ThmHurwitz}, but not via Theorem~\ref{ThmHurwitzodd},
cannot be Hurwitz stable. Moreover, some common features of $F_e(z)$ and the odd part $F_o(z)$ appear:

\begin{corollary}\label{CorHurwitzZero}
Under the assumption of Theorem~\ref{ThmHurwitz} or \ref{ThmHurwitzodd},
let $F(z)$ be Hurwitz stable. Then $F_e(z)$ and $F_o(z)$ are right coprime and simple, of which  all zeros are negative real.
\end{corollary}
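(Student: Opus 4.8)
The plan is to show that, once $F(z)$ is known to be Hurwitz stable, the hypotheses of \emph{both} Theorems \ref{ThmHurwitz} and \ref{ThmHurwitzodd} are in force, so that every assertion of the corollary follows by merging the conclusions of the two theorems. Each theorem on its own controls only one of the parts $F_e(z)$, $F_o(z)$: Theorem \ref{ThmHurwitz} yields (via its clauses (b), (c) and its closing statement) that $F_e(z)$ and $F_o(z)$ are right coprime, that all zeros of $F_e(z)$ are negative real, and that $F_e(z)$ is simple, while Theorem \ref{ThmHurwitzodd} yields symmetrically that $F_e(z)$ and $F_o(z)$ are right coprime, that all zeros of $F_o(z)$ are negative real, and that $F_o(z)$ is simple. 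Hence the whole point is to make both theorems applicable at once.

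The key step is the bridge recorded in the paragraph preceding this corollary. For a stable monic $F(z)$ of odd degree, \cite[P.~11]{XD} gives that $\wtilde R_{z,F}(z)$ is self-adjoint if and only if $F_e(z)$ is regular and $R_F(z)$ is self-adjoint; in other words, the hypothesis of Theorem \ref{ThmHurwitzodd} and that of Theorem \ref{ThmHurwitz} coincide for such $F(z)$. In the even-degree case the same conclusion holds: there $F_e(z)$ is automatically regular and, since $\wtilde R_{z,F}(z)=-\big(R_{z,F}(z)\big)^{-1}$ and $R_F(z)=-z^{-1}R_{z,F}(z)$, self-adjointness of $\wtilde R_{z,F}(z)$ (together with regularity of $F_o(z)$) is equivalent to self-adjointness of $R_F(z)$. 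Consequently, whichever of the two hypothesis sets we are handed in the corollary, the other is automatically met once $F(z)$ is stable, so both Theorems \ref{ThmHurwitz} and \ref{ThmHurwitzodd} apply.

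With both theorems available I would simply assemble their outputs: right coprimeness of $F_e(z)$ and $F_o(z)$ is delivered by either theorem, simplicity of $F_e(z)$ and negativity of its zeros come from Theorem \ref{ThmHurwitz}, and simplicity of $F_o(z)$ and negativity of its zeros come from Theorem \ref{ThmHurwitzodd}; this is exactly the claim. I expect the only genuine obstacle to be the bridge step --- confirming that Hurwitz stability forces the self-adjointness/regularity hypotheses of the two theorems to coincide. A self-contained alternative, bypassing \cite{XD}, would be to note that $-\big(R_{z,F}(z)\big)^{-1}$ is again a Herglotz-Nevanlinna function whenever $R_{z,F}(z)$ is, because ${\rm Im}\big(-R^{-1}\big)=(R^*)^{-1}({\rm Im}\,R)R^{-1}\succeq 0$, and then to apply Proposition \ref{ProRRepresent} and Corollary \ref{CorMassE_j} to the inverse in order to recover the properties of $F_o(z)$ directly; this, however, requires separately checking holomorphy and self-adjointness of the inverse, so I would favour the bridge argument.
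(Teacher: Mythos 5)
Your proposal is correct and matches the paper's (implicit) argument: the paper states this corollary immediately after the bridging paragraph which records, via \cite[P.~11]{XD} and the identities $\wtilde R_F=-(R_F)^{-1}$, $\wtilde R_{z,F}=-(R_{z,F})^{-1}$, that for a stable $F(z)$ the hypotheses of Theorems~\ref{ThmHurwitz} and~\ref{ThmHurwitzodd} imply one another, so that both theorems apply and their conclusions can be merged. Your spelled-out verification of the even-degree bridge and the assembly of clauses (b), (c), (d) and the simplicity statements is exactly what the paper intends.
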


This fact may motivate one to consider the possible extension of the following modification to Hermite-Biehler theorem \cite[Theorem 14, Chapter XV, p228]{Gan}.
\begin{theorem*}[\textit{Modified Hermite-Biehler theorem}]
Let $f(z)$ be a real scalar  polynomial of degree $n=2m$ (resp. $n=2m+1$) with the even part $f_{e}(z)$ and odd part $f_{o}(z)$.  $f(z)$  is  Hurwitz stable if and only if the highest coefficients of $f_{e}(z)$ and $f_{o}(z)$ have the same sign, and the roots of $f_{e}(z)$ and $f_{o}(z)$ are real negative, simple and interlacing, that is, by denoting the spectrums $\sigma (f_e):=\{\lambda_{e,k}\}_{k=1}^m$ and $\sigma (f_o):=\{\lambda_{o,k}\}_{k=1}^{m-1}$ (resp. $\sigma (f_o):=\{\lambda_{o,k}\}_{k=1}^m$),
$$
\lambda_{e,1}<\lambda_{o,1}<\lambda_{e,2}<\cdots<\lambda_{o,m-1}<\lambda_{e,m} \quad (\mbox{resp. } \ \lambda_{e,1}<\lambda_{o,1}<\lambda_{e,2}<\cdots<\lambda_{e,m}<\lambda_{o,m}).
$$
\end{theorem*}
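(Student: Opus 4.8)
The plan is to read the modified Hermite-Biehler theorem as the scalar ($p=1$) shadow of the machinery already in place: I would derive it from the stability criterion via Herglotz-Nevanlinna functions recalled in the Introduction, combined with the elementary zero--pole geometry of a rational Herglotz-Nevanlinna function, i.e.\ the scalar instance of Proposition~\ref{ProRRepresent}. Throughout I regard $f_e$ and $f_o$ as polynomials in $w=z^2$, so that $\sigma(f_e)$ and $\sigma(f_o)$ are their roots in $w$, and I set $r(w):=-f_o(w)/f_e(w)$. The whole argument then splits into an ``only if'' half (stability $\Rightarrow$ the root picture) and an ``if'' half (the root picture $\Rightarrow$ stability), with the parity of $n$ entering only through degree counts and the behaviour of $r$ at the ends of the real line.

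For the ``only if'' direction I would assume $f$ Hurwitz stable and invoke the criterion: $r$ is Herglotz-Nevanlinna with exactly $\lfloor n/2\rfloor$ poles, all negative real. By the scalar Chebotarev representation $r(w)=aw+b+\sum_j E_j/(\lambda_j-w)$ with $a\ge 0$, $b\in\R$ and every residue $E_j>0$ (strict positivity uses that stability forces $f_e,f_o$ to be coprime, so no pole cancels). The poles $\lambda_j$ are then simple and negative, and since they are exactly the roots of $f_e$ this already yields the simplicity and negativity of $\sigma(f_e)$. On each open interval between consecutive poles one has $r'(w)=a+\sum_j E_j/(\lambda_j-w)^2>0$, so $r$ increases strictly from $-\infty$ to $+\infty$ and has exactly one zero there; these zeros are the roots of $f_o$, which places one $f_o$-root strictly between consecutive $f_e$-roots and gives both the simplicity of $f_o$ and the interlacing of the two spectra.

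The parity of $n$ is felt only at infinity and on the two unbounded intervals. For $n=2m$ one has $\deg f_o=m-1=\deg f_e-1$ and $r$ strictly proper, so all $m-1$ zeros are trapped between the $m$ poles; for $n=2m+1$ one has $\deg f_o=\deg f_e=m$ and $r\to b$, so exactly one additional zero is produced on an unbounded interval, its side being fixed by the sign of the limit. Tracking this extremal zero, together with the residue identity $E_j=f_o(\lambda_j)/f_e'(\lambda_j)>0$ and the global monotonicity of $r$, is what yields the precise alternation pattern asserted in the statement (once the roots are listed in the chosen variable) and the equal-sign condition on the leading coefficients; that every $f_o$-root, including the extremal one, still lies in $(-\infty,0)$ comes from the same endpoint analysis.

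The ``if'' direction I would obtain by running this backwards: from negative, simple, interlacing roots together with the sign hypothesis, assemble $r=-f_o/f_e$ through its partial fraction, use the alternation to force each residue $E_j>0$, conclude that $r$ is Herglotz-Nevanlinna with $\lfloor n/2\rfloor$ negative poles and the correct limit, and apply the criterion to get Hurwitz stability. The main obstacle I anticipate is not the Herglotz-Nevanlinna input but the bookkeeping around it: one must split cleanly on the parity of $n$, fix once and for all the variable in which the roots are indexed (the squared variable $w=z^2$ versus the imaginary-axis variable, which reverses the order), and analyse $r$ on the two unbounded intervals and at infinity carefully enough to place the extremal zero on the correct side and to extract the leading-coefficient sign. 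Getting this endpoint behaviour right in the odd-degree case, where the natural $w$-ordering is opposite to the one the statement lists, is the delicate point.
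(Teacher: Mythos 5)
The paper does not actually prove this statement: it is quoted as a classical result and attributed to \cite[Theorem~14, Chapter~XV, p.~228]{Gan}, so there is no in-paper proof to compare against. Judged on its own, your route --- feed $f$ into the stability criterion via Herglotz--Nevanlinna functions, expand $r=-f_o/f_e$ by the scalar Chebotarev partial-fraction formula, and use $r'(w)=a+\sum_j E_j/(\lambda_j-w)^2>0$ to trap exactly one root of $f_o$ in each gap between consecutive (simple, negative) poles --- is the standard derivation, and the interior part of it (positivity of the residues $E_j=f_o(\lambda_j)/f_e'(\lambda_j)$ via coprimality, one zero per bounded gap, negativity of the trapped roots, the leading-coefficient sign from the behaviour of $r$ on the unbounded intervals) is correct and would go through.

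The genuine gap is exactly the step you defer to the end: the placement of the extremal root of $f_o$ in the odd case, and it cannot be resolved in favour of the statement as literally displayed. The criterion you quote from the Introduction asserts $\lim_{w\to\infty}r(w)>0$ for odd $n$, but this is inconsistent with the paper's own Theorem~\ref{ThmHurwitz}(e) (which requires $\lim_{z\to\infty}R_F(z)\prec 0$) and with examples: for $f(z)=(z+1)^3$ one has $f_e(w)=3w+1$, $f_o(w)=w+3$, so $r(w)=-(w+3)/(3w+1)\to -1/3<0$. With the (correct) negative limit, $r$ increases from $-1/3$ to $+\infty$ on $(-\infty,\lambda_{e,1})$ and stays negative on $(\lambda_{e,m},+\infty)$, so the extra root of $f_o$ lands to the \emph{left} of every root of $f_e$; the resulting chain is $\lambda_{o,1}<\lambda_{e,1}<\cdots<\lambda_{o,m}<\lambda_{e,m}$, the reverse of the displayed pattern (for $(z+1)^3$ the displayed inequality $\lambda_{e,1}<\lambda_{o,1}$ reads $-1/3<-3$, which is false; the pattern only matches if the roots are ordered by absolute value). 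So you must choose: import the quoted criterion verbatim and base the endpoint analysis on a wrong sign, or do the endpoint analysis correctly and prove an interlacing order different from the one asserted. Flagging this as ``the delicate point'' is not enough --- it is the point on which the whole odd-degree half of the claim turns, and your proposal leaves it open.
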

Unfortunately in the matrix case $F_e(z)$ and $F_o(z)$ may share common zeros even when $F(z)$ is Hurwitz stable, as is illustrated in the following examples.

\begin{example}
Let a monic $2\times 2$ matrix polynomial of degree $3$
$$
F(z):=\begin{bmatrix}
z^3+ 18 z^2+108 z+216 & 0\\
0 & z^3+3 z^2+12 z+20
\end{bmatrix},
$$
where the odd part $
F_o(z)$ and the even part $F_e(z)$ are written as
$$
F_o(z)=\begin{bmatrix}
z+108 & 0\\
0 & z+12
\end{bmatrix},\quad 
F_e(z)=\begin{bmatrix}
18 z+216 & 0\\
0 & 3 z+20
\end{bmatrix}.
$$ 
By calculation, $\sigma(F)=\{-6,-2,-\frac{1}{2}\pm \frac{\sqrt{39}}{2}{\rm i} \}$ and $F(z)$ is Hurwitz stable.
On the other hand, obviously the conclusion of Corollary \ref{CorHurwitzZero} is fulfilled for this example and the rational matrix function
$$
R_F(z):=-\dfrac{F_{e}(z)}{z F_{o}(z)}=\frac{\begin{bmatrix} 2 & 0\\ 0 & \frac{5}{3}\end{bmatrix}}{-z}+\frac{\begin{bmatrix} 16 & 0\\ 0 & 0\end{bmatrix}}{-108-z}+\frac{\begin{bmatrix} 0 & 0\\ 0 & \frac{4}{3}\end{bmatrix}}{-12-z}
$$
is a matrix-valued Herglotz-Nevanlinna function.
However, $F_e(z)$ and $F_o(z)$ share a common zero $-12$. 

\end{example}

\section*{Acknowledgement}
The author is grateful to Alexander Dyachenko  who inspired the author to this intersting topic, suggested a few corrections in Proposition \ref{ProRRepresent} and provided a number of valuable modifications for the whole manuscript. This work was supported by the Foundation for Fostering Research of Young Teachers in South China Normal  University (Grant No. 19KJ20).

\appendix

\bibliographystyle{amsplain}

\end{document}